\newcommand\version{December 30, 2015}
\newtheorem{theorem}{Theorem}
\newtheorem{proposition}[theorem]{Proposition}
\newtheorem{lemma}[theorem]{Lemma}
\newtheorem{corollary}[theorem]{Corollary}
\theoremstyle{definition}
\theoremstyle{remark}
\newtheorem{remark}[theorem]{Remark}
\newcommand{\C}{\mathbb{C}}
\renewcommand{\epsilon}{\varepsilon}
\renewcommand{\phi}{\varphi}
\newcommand{\R}{\mathbb{R}}
\DeclareMathOperator{\codim}{codim}
\DeclareMathOperator{\dist}{dist}
\DeclareMathOperator{\im}{Im}
\DeclareMathOperator{\ran}{ran}
\DeclareMathOperator{\re}{Re}
\begin{document}

\title[Complex surface potentials --- \version]{Eigenvalues of Schr\"odinger operators with complex surface potentials}

\author{Rupert L. Frank}
\address{Rupert L. Frank, Mathematics 253-37, Caltech, Pasadena, CA 91125, USA}
\email{rlfrank@caltech.edu}

\begin{abstract}
We consider Schr\"odinger operators in $\R^d$ with complex potentials supported on a hyperplane and show that all eigenvalues lie in a disk in the complex plane with radius bounded in terms of the $L^p$ norm of the potential with $d-1<p\leq d$. We also prove bounds on sums of powers of eigenvalues.
\end{abstract}

\dedicatory{To Pavel Exner on the occasion of his 70th birthday}

\maketitle

\renewcommand{\thefootnote}{${}$} \footnotetext{\copyright\, 2015 by the author. This paper may be reproduced, in its entirety, for non-commercial purposes.}

\subsection*{Introduction and main results}

Recently there has been great interest in bounds on eigenvalues of Schr\"odinger operators with complex potentials. A conjecture of Laptev and Safronov \cite{LS} states that for a certain range of $p$'s, all eigenvalues of a Schr\"odinger operator lie in a disk in the complex plane whose radius is bounded from above in terms of only the $L^p$ norm of the potential. This conjecture was motivated by a corresponding result by Abramov, Aslanyan and Davies \cite{AAD} in one dimension and with $p=1$. In one part of the parameter regime the conjecture was proved in \cite{F}, and in the other part it was proved in \cite{FS} for radial potentials. For arbitrary potentials it is still open.

In this paper we deal with the analogue of this question for potentials supported on a hyperplane, which is a special case of what is called a `leaky graph Hamiltonian' in \cite{E}. More specifically, in $\R^d$, $d\geq 2$, we introduce coordinates $x=(x',x_d)$ with $x'\in\R^{d-1}$ and $x_d\in\R$ and consider the Schr\"odinger operator
\begin{equation}
\label{eq:ham}
-\Delta + \sigma(x')\delta(x_d)
\qquad\text{in}\ L^2(\R^d)
\end{equation}
with a complex function $\sigma$ on $\R^{d-1}$. If $\sigma\in L^p(\R^{d-1})$ for some $p>1$ in $d=2$ and $p\geq d-1$ in $d\geq 3$, this formal expression can be given meaning as an $m$-sectorial operator in $L^2(\R^d)$ through the quadratic form
\begin{equation}
\label{eq:quadform}
\int_{\R^d} |\nabla \psi(x)|^2\,dx + \int_{\R^{d-1}} \sigma(x') |\psi(x',0)|^2\,dx'
\end{equation}
with form domain $H^1(\R^d)$. If also $p<\infty$, it is a consequence of relative form compactness that the spectrum of this operator in $\C\setminus[0,\infty)$ consists of isolated eigenvalues of finite algebraic multiplicities; this is discussed below in more detail.

For \emph{real} $\sigma$, the variational principle for the lowest eigenvalue and the Sobolev trace theorem imply that any eigenvalue $E$ satisfies
$$
E \geq - \left( C_{\gamma,d} \int_{\R^{d-1}} \sigma(x')_-^{2\gamma+d-1}\,dx' \right)^{1/\gamma}
$$
for all $\gamma>0$ with a constant $C_{\gamma,d}$ independent of $\sigma$.

Our main result is an analogue of this bound for complex $\sigma$.

\begin{theorem}\label{main}
Let $d\geq 2$ and $0<\gamma\leq 1/2$. There is a constant $D_{\gamma,d}$ such that for any complex $\sigma\in L^{2\gamma+d-1}(\R^{d-1})$ and any eigenvalue $E\in\C$ of $-\Delta+\sigma(x')\delta(x_d)$ in $L^2(\R^d)$,
$$
|E|^\gamma \leq D_{\gamma,d} \int_{\R^{d-1}} |\sigma(x')|^{2\gamma+d-1}\,dx' \,.
$$
\end{theorem}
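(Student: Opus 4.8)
The plan is to convert the eigenvalue equation into a fixed-point equation on the hyperplane and then use a Birman--Schwinger principle together with the mapping properties of the free resolvent restricted to $\{x_d=0\}$. Suppose $E=k^2\in\C\setminus[0,\infty)$ (with $\im k>0$) is an eigenvalue with eigenfunction $\psi\in H^1(\R^d)$. Writing $u(x')=\psi(x',0)$ for the trace, the equation $(-\Delta-E)\psi = -\sigma(x')\delta(x_d)\otimes u$ gives $\psi = -(-\Delta-E)^{-1}(\sigma u \otimes \delta)$, and taking the trace at $x_d=0$ yields the Birman--Schwinger-type identity $u = -T_E(\sigma u)$, where $T_E$ is the operator on $\R^{d-1}$ with integral kernel obtained from the free resolvent kernel of $-\Delta-E$ in $\R^d$ evaluated at $x_d=x_d'=0$. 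Concretely, partial Fourier transform in $x'$ shows that $T_E$ is the Fourier multiplier $(\xi'\mapsto (2\sqrt{|\xi'|^2-E})^{-1})$ on $\R^{d-1}$ — the resolvent of the (shifted) half-Laplacian $\sqrt{-\Delta'-E}$, up to the factor $1/2$. Factoring $\sigma = |\sigma|^{1/2}\,(\sgn\sigma)\,|\sigma|^{1/2}$ in the usual way, the existence of the eigenvalue forces $|\sigma|^{1/2} T_E |\sigma|^{1/2}$ to have $-1$ as an eigenvalue; in particular its operator norm on $L^2(\R^{d-1})$ is at least $1$.

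The heart of the matter is then a quantitative bound
$$
\bigl\| |\sigma|^{1/2} T_E |\sigma|^{1/2} \bigr\|_{L^2\to L^2} \leq C_{\gamma,d}\, \|\sigma\|_{L^{2\gamma+d-1}(\R^{d-1})}\,|E|^{-\gamma},
$$
valid for $0<\gamma\leq 1/2$. By the Schur/duality trick this norm is controlled by $\| |T_E|^{1/2}|\sigma| |T_E|^{1/2}\|$, so it suffices to bound $|\sigma|$ as a form-perturbation of $|T_E|^{-1}\sim \sqrt{-\Delta'-E}$, i.e. to prove a trace-type inequality
$$
\int_{\R^{d-1}} |\sigma(x')|\,|v(x')|^2\,dx' \leq C\,\|\sigma\|_{2\gamma+d-1}\, \bigl\| (\,|\xi'|^2-E)^{1/4} \hat v\,\bigr\|_{L^2}^{2}\quad\text{?}
$$
This is where the exponent $2\gamma+d-1$ and the restriction $\gamma\le 1/2$ enter: by Hölder in $x'$ it reduces to a bound $\|v\|_{L^{q}(\R^{d-1})}^2 \lesssim \|(|\xi'|^2-E)^{1/4}\hat v\|_{L^2}^2$ with $1/q = 1/2 - (2\gamma+d-1)^{-1}$; since the multiplier behaves like a fractional operator of order $1/2$ on $\R^{d-1}$, the relevant inequality is the Sobolev embedding $\dot H^{1/2}(\R^{d-1}) \hookrightarrow L^{2(d-1)/(d-2)}(\R^{d-1})$ for $d\ge 3$ (and its endpoint/exponential-class substitute for $d=2$), combined with the scaling that produces the factor $|E|^{-\gamma}$. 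One must interpolate between the $E$-dependent $L^2\to L^2$ bound (which costs $|E|^{-1/2}$, the case $\gamma=1/2$) and the scale-invariant Sobolev bound (formally $\gamma$ down to $0$), with the extra care that for complex $E$ one uses $\re\sqrt{|\xi'|^2-E}\gtrsim (|\xi'|^2+|E|)^{1/2}$, so $|T_E(\xi')|\lesssim (|\xi'|^2+|E|)^{-1/2}$ uniformly in $\arg E$.

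The main obstacle I anticipate is the $L^2\to L^2$ endpoint estimate and, relatedly, the genuinely two-dimensional case. For $d=2$ the trace operator maps $H^1(\R^2)$ into $H^{1/2}(\R)$, whose critical Sobolev exponent is $\infty$, so the naive Hölder argument degenerates at $p=1$ exactly as in \cite{AAD}; one needs a substitute inequality — for instance an estimate of the form $\||\sigma|^{1/2}(\,-\Delta'+|E|\,)^{-1/4}\|_{\mathcal S_?}$ in a suitable Schatten or Lorentz class, or a direct kernel estimate using that the kernel of $(|\xi'|^2-E)^{-1/2}$ on $\R$ is (a Bessel-type function) bounded by $C\log_+(1/(|x'-y'|\sqrt{|E|})) + $ exponentially decaying tail. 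Handling this logarithmic kernel while still extracting the clean power $|E|^{-\gamma}$ for the full range $0<\gamma\le 1/2$, uniformly for complex $E$ approaching the positive real axis, is the delicate point; everything else is a packaging of the Birman--Schwinger principle and standard Sobolev/Hölder estimates on $\R^{d-1}$.
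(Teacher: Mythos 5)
Your reduction via the Birman--Schwinger principle to a norm bound on $|\sigma|^{1/2}(-\Delta'-E)^{-1/2}|\sigma|^{1/2}$ is exactly the paper's starting point (Lemma \ref{bsprinc} together with Lemma \ref{dn}), but the key estimate you propose does not hold. You claim $\re\sqrt{|\xi'|^2-E}\gtrsim(|\xi'|^2+|E|)^{1/2}$, hence $|(|\xi'|^2-E)^{-1/2}|\lesssim(|\xi'|^2+|E|)^{-1/2}$ uniformly in $\arg E$, and then run a H\"older/Sobolev-embedding argument. This pointwise multiplier bound is false precisely in the regime that matters: for $E=\lambda+i\epsilon$ with $\lambda>0$ and $\epsilon\to0$, on the region $|\xi'|^2\approx\lambda$ one has $\big||\xi'|^2-E\big|\approx\epsilon\ll|\xi'|^2+|E|$, so the symbol is of size $\epsilon^{-1/2}$, not $\lambda^{-1/2}$. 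Any argument that uses only the modulus of the symbol (or a pointwise kernel bound of the type you sketch for $d=2$) produces constants blowing up as $\arg E\to0$ and therefore cannot yield a bound depending on $|E|$ alone, which is the whole content of the theorem. This is exactly why the paper invokes the uniform Sobolev inequality of Kenig--Ruiz--Sogge (Proposition \ref{unifsob}): the uniform-in-argument $L^p\to L^{p'}$ bound for $(-\Delta'-z)^{-1/2}$, with $p$ reaching the Stein--Tomas-type endpoint $2d/(d+1)$ needed for $\gamma=1/2$, rests on oscillatory-integral/restriction-type kernel estimates near the singular sphere $|\xi'|^2=\re z$ and complex interpolation for the analytic family $T_\zeta(z)$; it is not a consequence of $\dot H^{1/2}(\R^{d-1})\hookrightarrow L^{2(d-1)/(d-2)}(\R^{d-1})$ plus scaling. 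Your closing paragraph correctly identifies uniformity near the positive real axis as the delicate point, but the substitute you offer does not close it.

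A second, smaller omission: the theorem concerns any eigenvalue $E\in\C$, including possible embedded eigenvalues $E\in[0,\infty)$, where $(-\Delta-E)^{-1}$ does not exist and your argument (which starts from $E\in\C\setminus[0,\infty)$) says nothing. The paper handles this case separately by an approximation argument: one sets $\phi_\epsilon=G_0(-\Delta-E-i\epsilon)^{-1}(-\Delta-E)\psi$, shows $\phi_\epsilon\rightharpoonup G_0\psi$ weakly in $L^2(\R^{d-1})$, applies the Birman--Schwinger-type bound at $E+i\epsilon$, and lets $\epsilon\to0$, using that $G_0\psi\not\equiv0$. This step would need to be added to your proof as well.
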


When $\gamma>1/2$ we cannot show that eigenvalues are bounded, but we can show that, if $(E_j)$ is a sequence of eigenvalues with $\re E_j\to \infty$, then $\im E_j\to 0$. The following theorem gives a quantitative version of this. We use the notation
\begin{equation}
\label{eq:deltaz}
\delta(z) := \dist(z,\C\setminus[0,\infty))=
\begin{cases}
|z| & \text{if}\ \re z\leq 0 \,,\\
|\im z| & \text{if}\ \re z> 0 \,.
\end{cases}
\end{equation}

\begin{theorem}\label{main.5}
Let $d\geq 2$ and $\gamma> 1/2$. There is a constant $D_{\gamma,d}$ such that for any complex $\sigma\in L^{2\gamma+d-1}(\R^{d-1})$ and any eigenvalue $E\in\C$ of $-\Delta+\sigma(x')\delta(x_d)$ in $L^2(\R^d)$,
$$
|E|^{1/2}\ \delta(E)^{(2\gamma-1)/2} \leq D_{\gamma,d} \int_{\R^{d-1}} |\sigma(x')|^{2\gamma+d-1}\,dx' \,.
$$
\end{theorem}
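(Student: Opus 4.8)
The plan is to combine the Birman--Schwinger principle with an $L^p$-mapping estimate for the hyperplane-traced free resolvent. Write $R_E=(-\Delta-E)^{-1}$ on $L^2(\R^d)$, let $\gamma_0\colon H^1(\R^d)\to L^2(\R^{d-1})$ be the trace $u\mapsto u(\cdot,0)$, and set $\mathcal G_E:=\gamma_0R_E\gamma_0^*$; a one-dimensional computation in the $x_d$-variable shows that $\mathcal G_E$ acts on $L^2(\R^{d-1})$ as the Fourier multiplier $\xi'\mapsto(2\sqrt{|\xi'|^2-E}\,)^{-1}$, the square root being the branch with positive real part (legitimate since $|\xi'|^2-E\notin(-\infty,0]$ when $E\notin[0,\infty)$). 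For an eigenvalue $E\in\C\setminus[0,\infty)$ of $-\Delta+\sigma(x')\delta(x_d)$, the Birman--Schwinger principle shows that $-1$ is an eigenvalue of $K_E:=|\sigma|^{1/2}\,\mathcal G_E\,(\sgn\sigma)\,|\sigma|^{1/2}$ on $L^2(\R^{d-1})$, whence $\|K_E\|\ge1$. (That eigenvalues can only lie in $\C\setminus[0,\infty)$, and that this correspondence is valid under the stated integrability of $\sigma$, is part of the relative form-compactness discussed in the introduction.) Set $p:=2\gamma+d-1$ and pick $a$ with $\tfrac1a=\tfrac12+\tfrac1{2p}$; then $\tfrac1a-\tfrac1{a'}=\tfrac1p$ and multiplication by $|\sigma|^{1/2}$ maps $L^2\to L^a$ and $L^{a'}\to L^2$ with norm $\|\sigma\|_{L^p(\R^{d-1})}^{1/2}$, so Hölder's inequality gives
\[
1\le\|K_E\|_{L^2\to L^2}\le\|\sigma\|_{L^p(\R^{d-1})}\;\bigl\|\mathcal G_E\bigr\|_{L^a(\R^{d-1})\to L^{a'}(\R^{d-1})}.
\]

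Everything thus reduces to the resolvent bound
\[
\bigl\|\mathcal G_E\bigr\|_{L^a(\R^{d-1})\to L^{a'}(\R^{d-1})}\ \le\ C_{\gamma,d}\,|E|^{-\frac1{2p}}\,\delta(E)^{-\frac{2\gamma-1}{2p}},\qquad\tfrac1a-\tfrac1{a'}=\tfrac1p,\ \ p=2\gamma+d-1.
\]
Given this, the theorem is immediate: inserting it into the last display yields $1\le C_{\gamma,d}\,\|\sigma\|_{L^p}\,|E|^{-1/(2p)}\,\delta(E)^{-(2\gamma-1)/(2p)}$, and raising to the $p$-th power gives $|E|^{1/2}\delta(E)^{(2\gamma-1)/2}\le C_{\gamma,d}^{\,p}\int_{\R^{d-1}}|\sigma|^{2\gamma+d-1}$. (Using $\delta(E)\le|E|$, the same estimate also recovers Theorem~\ref{main} for $0<\gamma\le\tfrac12$.) To prove the resolvent bound I would first remove $|E|$ by scaling: the convolution kernel of $\mathcal G_E$ obeys $g_E(x')=|E|^{(d-2)/2}\,g_{E/|E|}\bigl(|E|^{1/2}x'\bigr)$, which reduces the claim to $|E|=1$, where it reads $\|\mathcal G_E\|_{L^a\to L^{a'}}\lesssim\delta(E)^{-(2\gamma-1)/(2p)}$.

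For $|E|=1$ the multiplier $(|\xi'|^2-E)^{-1/2}$ is bounded by $\tfrac12\delta(E)^{-1/2}$ and, when $\re E$ is close to $1$, is concentrated in a shell of thickness $\sim\delta(E)$ around the sphere $|\xi'|=(\re E)_+^{1/2}$. I would split it as $\sum_{0\le j\lesssim\log(1/\delta(E))}m^{(j)}$ (plus a harmless smooth remainder), with $m^{(j)}$ supported where $\bigl||\xi'|^2-\re E\bigr|\sim 2^{-j}$, so $|m^{(j)}|\lesssim 2^{j/2}$ and the support is an annular shell of thickness $\sim 2^{-j}$. For each piece the Stein--Tomas adjoint-restriction estimate for that sphere in $\R^{d-1}$, applied to its $2^{-j}$-neighbourhood and interpolated with the trivial $L^2\to L^2$ bound, gives $\|m^{(j)}(\nabla')\|_{L^a\to L^{a'}}\lesssim 2^{j/2}\,(2^{-j})^{d/(2p)}=2^{\,j(2\gamma-1)/(2p)}$; since $\gamma>\tfrac12$ this exponent is positive, so the geometric sum is dominated by its last term $\sim\delta(E)^{-(2\gamma-1)/(2p)}$, which is the claim. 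The pieces with $2^{-j}\sim1$ --- equivalently the Hardy--Littlewood--Sobolev part coming from the $|x'|^{2-d}$ singularity of the kernel at $x'=0$ --- are $O(1)$ and treated by Young's or the HLS inequality. Alternatively, this entire step may be replaced by invoking the known uniform resolvent estimates for $(-\Delta_{\R^{d-1}}-E)^{-1/2}=2\mathcal G_E$ of Kenig--Ruiz--Sogge type together with their sharp extension below the uniform range.

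The main obstacle is exactly this resolvent estimate, and within it the \emph{sharp} power of $\delta(E)$: away from $[0,\infty)$ the multiplier is a classical symbol of order $-1$ and everything is elementary, but as $E$ tends to the positive half-line the operator degenerates, and the correct rate of growth can only be extracted by using the oscillation of the resolvent kernel --- i.e.\ genuine restriction-theoretic input, not just its pointwise size. The remaining ingredients (justifying the Birman--Schwinger correspondence from form-boundedness and the dual-exponent bookkeeping in the Hölder step) are routine.
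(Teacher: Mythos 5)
Your argument is correct, and its skeleton coincides with the paper's: the Birman--Schwinger principle (Lemma \ref{bsprinc}) together with the identity $\Gamma(-\Delta-E)^{-1}\Gamma^*=\tfrac12(-\Delta'-E)^{-1/2}$ (Lemma \ref{dn}) and the H\"older step reduce everything to the bound $\|\alpha_1(-\Delta'-E)^{-1/2}\alpha_2\|\lesssim |E|^{-1/(2p)}\delta(E)^{-(2\gamma-1)/(2p)}\|\alpha_1\|_{2p}\|\alpha_2\|_{2p}$ with $p=2\gamma+d-1$, which is exactly the second part of Corollary \ref{bs}. Where you genuinely differ is in how this resolvent estimate is proved: you rescale to $|E|=1$ and decompose the multiplier $(|\xi'|^2-E)^{-1/2}$ dyadically in shells $\bigl||\xi'|^2-\re E\bigr|\sim 2^{-j}$, bound each shell by a thickened Stein--Tomas estimate interpolated with the trivial $L^2\to L^2$ bound, and sum the geometric series down to scale $\delta(E)$; your exponent bookkeeping ($2^{j/2}\cdot 2^{-jd/(2p)}=2^{j(2\gamma-1)/(2p)}$, positive precisely because $\gamma>1/2$) is right, and for $d=2$ the restriction input degenerates to a trivial $L^1\to L^\infty$ bound, so the scheme covers all $d\geq2$. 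The paper instead obtains the same estimate (inequality \eqref{eq:unifsob2} of Proposition \ref{unifsob}) in two lines, by Riesz--Thorin interpolation between the Kenig--Ruiz--Sogge-type uniform bound \eqref{eq:unifsob} at the endpoint exponent --- itself proved by complex interpolation with the analytic family $T_\zeta(z)=e^{\zeta^2}(-\Delta-z)^{-\zeta}$ --- and the elementary bound $\|u\|_2\leq\delta(z)^{-s}\|(-\Delta-z)^s u\|_2$; your closing remark about quoting KRS-type estimates ``below the uniform range'' is precisely this shortcut. Your route redoes the restriction-theoretic work by hand, which makes the origin of the $\delta(E)$-power transparent and is self-contained; the paper's route is shorter once \eqref{eq:unifsob} is available. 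One small correction: relative form compactness does not exclude eigenvalues embedded in $[0,\infty)$, so your parenthetical claim that eigenvalues can only lie in $\C\setminus[0,\infty)$ is unjustified; for the present theorem this is harmless, since $\delta(E)=0$ for $E\in[0,\infty)$ makes the asserted inequality trivial there (for Theorem \ref{main}, where that case is not trivial, the paper supplies a separate approximation argument).
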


In dimensions $d\geq 3$ we also obtain a criterion for the absence of eigenvalues.

\begin{theorem}\label{main1}
Let $d\geq 3$. There is a constant $D_{0,d}$ such that for any complex $\sigma\in L^{d-1}(\R^{d-1})$, if
$$
\int_{\R^{d-1}} |\sigma(x')|^{d-1}\,dx' < D_{0,d}^{-1} \,,
$$
then $-\Delta+\sigma(x')\delta(x_d)$ in $L^2(\R^d)$ has no eigenvalue.
\end{theorem}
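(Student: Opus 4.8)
The natural strategy is the Birman--Schwinger principle together with a uniform Sobolev-type (Kenig--Ruiz--Sogge-type) estimate on $\R^{d-1}$. Write $\sigma=|\sigma|^{1/2}\sgn(\sigma)|\sigma|^{1/2}$ and let $\tau\colon H^1(\R^d)\to H^{1/2}(\R^{d-1})\subset L^{2(d-1)/(d-2)}(\R^{d-1})$ be the trace $\tau\psi=\psi(\cdot,0)$, so that the potential term in \eqref{eq:quadform} is $\int_{\R^{d-1}}\sigma\,|\tau\psi|^2$. Integrating $(|\xi'|^2+\xi_d^2-E)^{-1}$ over $\xi_d$ shows that $\tau(-\Delta-E)^{-1}\tau^*$ is the Fourier multiplier on $L^2(\R^{d-1})$ with symbol $\bigl(2\sqrt{|\xi'|^2-E}\,\bigr)^{-1}$ (branch with positive real part); equivalently $\tau(-\Delta_{\R^d}-E)^{-1}\tau^*=\tfrac12(-\Delta_{\R^{d-1}}-E)^{-1/2}$, which in particular is bounded on $L^2(\R^{d-1})$. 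Hence the Birman--Schwinger principle applies: if $E\in\C\setminus[0,\infty)$ is an eigenvalue of $-\Delta+\sigma(x')\delta(x_d)$ then $-1$ is an eigenvalue of
$$
K_E:=|\sigma|^{1/2}\,\sgn(\sigma)\,\tau(-\Delta-E)^{-1}\tau^*\,|\sigma|^{1/2}\qquad\text{on }L^2(\R^{d-1})\,,
$$
so $\|K_E\|_{L^2\to L^2}\geq 1$. (Nonvanishing of the relevant Birman--Schwinger vector $|\sigma|^{1/2}\tau\psi$ follows because $-\Delta$ has no eigenvalue outside $[0,\infty)$.)

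It therefore suffices to bound $\|K_E\|$ in terms of $\|\sigma\|_{L^{d-1}}$, uniformly in $E$. Put $p:=\tfrac{2(d-1)}{d}$, so that $p>1$ precisely because $d\geq 3$, with conjugate exponent $p'=\tfrac{2(d-1)}{d-2}$ and $\tfrac1p-\tfrac1{p'}=\tfrac1{d-1}$. Two applications of H\"older's inequality, using $\||\sigma|^{1/2}\|_{L^{2(d-1)}}^2=\|\sigma\|_{L^{d-1}}$ and $\|\sgn(\sigma)\|_{L^\infty}\le 1$, yield
$$
\|K_E\|_{L^2\to L^2}\ \le\ \|\sigma\|_{L^{d-1}(\R^{d-1})}\ \bigl\|\tau(-\Delta-E)^{-1}\tau^*\bigr\|_{L^{p}(\R^{d-1})\to L^{p'}(\R^{d-1})}\,,
$$
so Theorem~\ref{main1} follows, with $D_{0,d}:=C_d^{\,d-1}$, from the $E$-uniform estimate
$$
C_d:=\sup_{E\in\C\setminus[0,\infty)}\bigl\|\tfrac12(-\Delta_{\R^{d-1}}-E)^{-1/2}\bigr\|_{L^{2(d-1)/d}(\R^{d-1})\to L^{2(d-1)/(d-2)}(\R^{d-1})}<\infty\,.
$$
The balance $\tfrac1p-\tfrac1{p'}=\tfrac1{d-1}$ is exactly the scaling under which this ``half-resolvent'' bound can be independent of $E$.

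Establishing this uniform estimate is the crux, and the step I expect to be the main obstacle. The symbol $(|\xi'|^2-E)^{-1/2}$ corresponds, with $k=\sqrt E$ ($\im k\ge 0$), to the radial convolution kernel $c_d\,(k/|x'|)^{(d-2)/2}H^{(1)}_{(d-2)/2}(k|x'|)$ on $\R^{d-1}$; after reducing to $|k|=1$ by scaling, I would split dyadically in $|x'|$. On $|x'|\lesssim 1$ the kernel is essentially the fixed homogeneous kernel $|x'|^{-(d-2)}$, and that part is handled by the Hardy--Littlewood--Sobolev inequality in $\R^{d-1}$. On $|x'|\gtrsim 1$ one inserts the asymptotics $H^{(1)}_{(d-2)/2}(t)\sim t^{-1/2}e^{it}$ and estimates the oscillatory contribution shell by shell by a Stein--Tomas / stationary-phase argument, summing a geometric series uniformly in $\arg k\in[0,\pi]$; the delicate case is $\arg k$ near $0$ or $\pi$, i.e.\ $E$ near $(0,\infty)$, where the naive bound $\bigl|(|\xi'|^2-E)^{-1/2}\bigr|\le|\xi'|^{-1}$ fails and curvature of $\{|\xi'|^2=\re E\}$ must be used. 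Alternatively one may quote the known uniform Sobolev inequalities for the analytic family $(-\Delta-z)^{-s}$, $0\le\re s\le1$ (Stein interpolation between an $L^2$ bound at $\re s=0$ and a Stein--Tomas estimate at $\re s=1$), specialized to $s=\tfrac12$.

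Granting the estimate, the hypothesis $\int_{\R^{d-1}}|\sigma|^{d-1}<D_{0,d}^{-1}=C_d^{-(d-1)}$ gives $\|\sigma\|_{L^{d-1}}<C_d^{-1}$, hence $\|K_E\|_{L^2\to L^2}<1$ for every $E\in\C\setminus[0,\infty)$, and by the Birman--Schwinger principle no such $E$ is an eigenvalue. Eigenvalues embedded in $[0,\infty)$ are ruled out by running the same argument with the limiting-absorption boundary values $(-\Delta-(\lambda\pm i0))^{-1}$, for which the supremum defining $C_d$ is unchanged, so that $\|K_{\lambda\pm i0}\|<1$ also precludes a nontrivial solution of the eigenvalue equation at $\lambda\ge0$. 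This completes the proof.
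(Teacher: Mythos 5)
Your reduction is exactly the paper's: the Birman--Schwinger principle, the identity $\Gamma(-\Delta-E)^{-1}\Gamma^*=\tfrac12(-\Delta'-E)^{-1/2}$ (Lemma \ref{dn}), H\"older with the scale-invariant exponent $p=2(d-1)/d$ (so $\||\sigma|^{1/2}\|_{2(d-1)}^2=\|\sigma\|_{d-1}$), and a $z$-uniform $L^{2(d-1)/d}\to L^{2(d-1)/(d-2)}$ bound for $(-\Delta'-z)^{-1/2}$, which is precisely the $\gamma=0$ case of Corollary \ref{bs}, i.e.\ Proposition \ref{unifsob} with $N=d-1$, $s=1/2$, at the endpoint $p=2N/(N+2s)$. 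However, two steps are left at the level of assertion, and one of them, as you describe it, would fail in the lowest dimension. First, the uniform estimate itself (which you rightly call the crux) is not proved. Your ``alternative'' route --- Stein interpolation of the family $(-\Delta-z)^{-\zeta}$ between an $L^2$ bound at $\re\zeta=0$ and a $z$-uniform bound at $\re\zeta=1$ --- only works for $d\geq 4$: for $d=3$, i.e.\ $N=2$, the uniform endpoint you would need at $\re\zeta=1$ is $L^1\to L^\infty$ for the full resolvent, which is false (logarithmic singularity of the kernel). This endpoint case is exactly where the paper has to work: it interpolates the Gamma-regularized family $\tilde T_\zeta(z)=e^{\zeta^2}\Gamma((N-2\zeta)/2)^{-1}(-\Delta-z)^{-\zeta}$ up to the line $\re\zeta=N/2$, where the $1/\Gamma$ factor restores the Kenig--Ruiz--Sogge kernel bound. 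Your first, direct route (dyadic splitting, HLS near the diagonal, oscillatory estimates far away) is plausible but amounts to re-proving the KRS endpoint and is not carried out.

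Second, the treatment of embedded eigenvalues $E\in[0,\infty)$ is a genuine gap as written. ``Running the same argument with the limiting-absorption boundary values'' presupposes that an eigenfunction $\psi$ with $E\geq 0$ yields a nontrivial solution of the boundary-value Birman--Schwinger equation, but the Birman--Schwinger principle you invoke (the paper's Lemma \ref{bsprinc}) is only available for $E\in\C\setminus[0,\infty)$, and since $E$ lies in the spectrum of $-\Delta$ one cannot simply apply $(-\Delta-E\mp i0)^{-1}$ to the eigenvalue equation. The paper closes this by an approximation argument: it sets $\phi_\epsilon:=G_0(-\Delta-E-i\epsilon)^{-1}(-\Delta-E)\psi$, shows $\phi_\epsilon\rightharpoonup G_0\psi$ weakly in $L^2(\R^{d-1})$, rewrites $\phi_\epsilon=\bigl(G_0(-\Delta-E-i\epsilon)^{-1}G^*\bigr)(G_0\psi)$ via the eigenvalue equation, applies the uniform bound of Corollary \ref{bs}, and concludes by weak lower semicontinuity of the norm together with $G_0\psi\not\equiv 0$. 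Some argument of this kind is needed to make your final sentence into a proof; with it, and with the uniform Sobolev estimate established as in Proposition \ref{unifsob}, your outline coincides with the paper's proof.
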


These three theorems are the analogues of the results in \cite{F,Fr3} for Schr\"odinger operators with usual potentials and our proof will follow the strategy in those papers (which, in turn, was motivated by \cite{AAD}).

Our final result concerns bounds on sums of powers of eigenvalues of $-\Delta + \sigma(x')\delta(x_d)$, which are analogues of the Lieb--Thirring inequalities \cite{LT}. Such bounds were shown in \cite{FL} for real $\sigma$ and, using the technique from \cite{FLLS}, extended to complex $\sigma$ provided one only considers eigenvalues outside of a cone around the positive real axis. The following theorem is useful for eigenvalues close to the positive real axis.

\begin{theorem}\label{main4}
Let $0<\gamma<1/2$ if $d=2$ and $0<\gamma\leq 1/2$ if $d\geq 3$. Let $\tau=0$ if $\gamma<(d-1)/(4d-6)$ and $\tau> ((4d-6)\gamma-(d-1))/(d-1-2\gamma)$ if $\gamma\geq (d-1)/(4d-6)$. Then there is a constant $L_{\gamma,d,\tau}$ such that, for any complex $\sigma\in L^{2\gamma+d-1}(\R^{d-1})$, the eigenvalues $(E_j)$ of $-\Delta+\sigma(x')\delta(x_d)$ in $L^2(\R^d)$, repeated according to their algebraic multiplicity, satisfy
$$
\left( \sum_j \delta(E_j) |E_j|^{-(1-\tau)/2} \right)^{2\gamma/(1+\tau)} \leq L_{\gamma,d,\tau} \int_{\R^{d-1}} |\sigma(x')|^{2\gamma+d-1}\,dx' \,.
$$ 
\end{theorem}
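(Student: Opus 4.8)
The plan is to follow the Birman--Schwinger strategy used for Theorems~\ref{main}--\ref{main1} and combine it with the summability technique of~\cite{FLLS}, which converts a resolvent bound into a bound on eigenvalue sums via the theory of Schatten-class perturbation determinants. The starting point is the Birman--Schwinger principle adapted to surface potentials: writing $\sigma = |\sigma|^{1/2}\,(\sgn\sigma)\,|\sigma|^{1/2}$ and letting $\gamma_0\colon H^1(\R^d)\to H^{1/2}(\R^{d-1})$ be the trace operator, one sees that $E\notin[0,\infty)$ is an eigenvalue of $-\Delta+\sigma(x')\delta(x_d)$ if and only if $-1$ is an eigenvalue of the Birman--Schwinger operator
\begin{equation*}
K(E) := |\sigma|^{1/2}\,\gamma_0\,(-\Delta-E)^{-1}\,\gamma_0^*\,(\sgn\sigma)\,|\sigma|^{1/2}
\end{equation*}
on $L^2(\R^{d-1})$. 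The operator $\gamma_0(-\Delta-E)^{-1}\gamma_0^*$ has an explicit convolution kernel on $\R^{d-1}$ obtained by integrating out the $x_d$-variable, and the key analytic input — already established in the proofs of the earlier theorems — is a smoothing/mapping estimate for this operator. Concretely one expects, for $d-1<2\gamma+d-1\le d$ i.e. in the stated range of $\gamma$, a bound of the form $\|K(E)\|_{\mathfrak{S}^q} \le C\, \delta(E)^{-a}|E|^{-b}\,\||\sigma|\|_{L^{2\gamma+d-1}}^{\,?}$ in a suitable Schatten norm $\mathfrak{S}^q$, with exponents $a,b$ matching the left-hand side of the theorem; the extra parameter $\tau$ enters precisely to adjust the split between the $\delta(E)$-power and the $|E|$-power, since the convolution kernel of $\gamma_0(-\Delta-E)^{-1}\gamma_0^*$ decays like $|E|^{-1/2}$ in the transverse scaling but only like $\delta(E)^{-1/2}$ near the threshold, and interpolating between these two behaviours costs a loss quantified by $\tau$.

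Next I would feed this Schatten bound into the perturbation-determinant machinery. Following~\cite{FLLS}, one considers the regularized determinant $d(E) = \det_{\lceil q\rceil}(I+K(E))$, which is analytic in $E\in\C\setminus[0,\infty)$, vanishes exactly at the eigenvalues (with multiplicity equal to the algebraic multiplicity), and satisfies $\log|d(E)| \le C\|K(E)\|_{\mathfrak{S}^q}^q$. One then applies a Jensen-type inequality (a Borichev--Golinskii--Kupin-style theorem on zeros of analytic functions in the cut plane with boundary control) to the function $d$: the growth of $\log|d(E)|$ near the cut $[0,\infty)$, governed by the $\delta(E)^{-a}$ blow-up, and near infinity, governed by the $|E|^{-b}$ decay, translate into the convergence of the weighted sum $\sum_j \delta(E_j)^{\alpha}|E_j|^{-\beta}$ with exponents dictated by $a,b,q$. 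Matching these to the exponents $\delta(E_j)|E_j|^{-(1-\tau)/2}$ raised to the power $2\gamma/(1+\tau)$ in the statement is then a bookkeeping exercise; the condition $\tau > ((4d-6)\gamma-(d-1))/(d-1-2\gamma)$ when $\gamma\ge (d-1)/(4d-6)$ is exactly the threshold at which the relevant Schatten exponent $q$ becomes finite (or the boundary-integrability condition in the complex-analysis lemma becomes satisfiable), while for smaller $\gamma$ one can take $\tau=0$ because $K(E)$ is already trace class with summable bounds.

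The main obstacle I expect is the Schatten-norm estimate on $K(E)$ with the sharp joint dependence on $\delta(E)$ and $|E|$. Unlike the operator-norm bound needed for Theorems~\ref{main}--\ref{main1}, here one needs $\mathfrak{S}^q$ control with the correct $q$, and this requires combining the explicit form of the transverse-integrated resolvent kernel with a Kato--Seiler--Simon-type inequality $\||\sigma|^{1/2} f(-i\nabla') g(x') \|_{\mathfrak{S}^q} \lesssim \|f\|_{\ell^q L^{q'}}\|g\|_{q}$ on $\R^{d-1}$, and then interpolating in $E$. The delicate point is that the natural estimate degrades both as $\re E\to+\infty$ (where $\delta(E)=|\im E|$ can be small while $|E|$ is large) and as $E\to 0$; getting a single inequality that captures both regimes with the product structure $|E|^{1/2}\delta(E)^{(2\gamma-1)/2}$-type weight — rather than a weaker bound with only one of the two factors — is what forces the introduction of $\tau$ and is the technical heart of the argument. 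A secondary, more routine difficulty is verifying that the regularized determinant and the Borichev--Golinskii--Kupin lemma apply verbatim in this setting, which amounts to checking analyticity and continuity of $E\mapsto K(E)$ in the relevant Schatten class up to the cut, away from the origin.
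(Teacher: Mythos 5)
Your overall architecture (Birman--Schwinger operator, Schatten-norm bound on $K(z)$, regularized determinant, Blaschke/Borichev--Golinskii--Kupin-type zero counting) is indeed the paper's route, but the key quantitative input you posit is misconceived, and this is precisely where the paper's technical work lies. You propose a bound $\|K(E)\|_{\mathfrak{S}^q}\le C\,\delta(E)^{-a}|E|^{-b}$ with a blow-up near the cut, and you explain $\tau$ as the loss from interpolating between an $|E|^{-1/2}$ and a $\delta(E)^{-1/2}$ behaviour of the kernel. In the range $0<\gamma\le 1/2$ treated by the theorem no $\delta(E)$-factor occurs at all: the technical main result (Proposition \ref{unifsobti}, hence Corollary \ref{bsti}) is a \emph{uniform} trace-ideal resolvent bound, $\|\alpha_1\Gamma(-\Delta-z)^{-1}\Gamma^*\alpha_2\|_r\le C|z|^{-\gamma/(2\gamma+d-1)}\|\alpha_1\|_{2(2\gamma+d-1)}\|\alpha_2\|_{2(2\gamma+d-1)}$, uniform in $\arg z$, with $r=2$ for $d=2$ and $r=2(d-2)(2\gamma+d-1)/(d-1-2\gamma)$ for $d\ge3$; it is proved by Kenig--Ruiz--Sogge-style complex interpolation between an $L^2$ bound on $\re\zeta=0$ and Hilbert--Schmidt bounds (kernel estimates plus Hardy--Littlewood--Sobolev) on a line $\re\zeta=t>N/2-1/2$, not by interpolating in $E$ between two kernel decay regimes. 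The factor $\delta(E_j)$ in the conclusion comes entirely from the zero-counting theorem (distance of the zeros of the regularized determinant to the boundary $[0,\infty)$), and $\tau$ is the correction exponent required there when the Schatten exponent is large relative to the $|z|$-decay, namely $\tau=\bigl(\tfrac{2\gamma r}{2\gamma+d-1}-1+\epsilon\bigr)_+$; the condition $\tfrac{2\gamma r}{2\gamma+d-1}<1$ is exactly $\gamma<(d-1)/(4d-6)$, which is where the theorem's threshold comes from.

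Two concrete consequences of the misidentification. First, your explanation of when $\tau=0$ is admissible (``because $K(E)$ is already trace class'') cannot be right: $r\ge 2$ in all cases, so $K(E)$ is never controlled in trace norm; $\tau=0$ is admissible when $2\gamma r/(2\gamma+d-1)<1$, a relation between the Schatten index and the $|z|$-decay, not a trace-class statement. Second, if the Schatten bound really degenerated like $\delta(E)^{-a}$ near $[0,\infty)$ with $a$ ``matching the left-hand side,'' the determinant would blow up near the cut and the Blaschke-type condition would only return a sum with a \emph{higher} power of $\delta(E_j)$ (the boundary blow-up must be paid for), not the first power appearing in the statement; so the proof as sketched would yield a strictly weaker inequality. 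In short: the scheme is right, but the heart of the proof --- establishing the argument-uniform trace-ideal estimate of Proposition \ref{unifsobti} after reducing $\Gamma(-\Delta-z)^{-1}\Gamma^*$ to $\tfrac12(-\Delta'-z)^{-1/2}$ via Lemma \ref{dn} --- is missing and is replaced by an estimate of the wrong form, and the bookkeeping for $\tau$ attached to it does not reproduce the stated exponents.
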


This is the analogue of a result from \cite{FSa} for Schr\"odinger operators with usual potentials. The method from \cite{Fr3} can probably be used to derive bounds for $\gamma> 1/2$, but to keep the exposition brief we do not pursue this here.

Our proof of Theorem \ref{main4} identifies, in the spirit of \cite{DeKa,BGK,DeHaKa0,DeHaKa,FSa,Fr3}, the eigenvalues of \eqref{eq:ham} with zeroes of an analytic function. As explained in detail in \cite{Fr3}, a result on zeroes of analytic functions \cite{BGK} plus inequalities on regularized determinants reduce the proof to resolvent bounds in trace ideals. These latter bounds are the content of Proposition \ref{unifsobti} and constitute the technical main result of this paper.

In conclusion we mention that there are two further methods which yield inequalities for sums of powers of eigenvalues. One method from \cite{DeHaKa0} relies on averaging the bounds from \cite{FL} with respect to the opening angle of the cone. Another method from \cite{Ha} is based on an extension of an inequality of Kato; see also \cite{Fr3}.

\begin{remark}\label{robin}
All the theorems reported here have an obvious analogue for the operator $-\Delta$ in $L^2(\R^d_+)$ with boundary condition $\partial\psi/\partial\nu = -\sigma\psi$. (Here $\R^d_+=\{x\in\R^d:\ x_d>0\}$ and $\partial/\partial\nu = - \partial/\partial x_d$.) This simply comes from the fact that the operator \eqref{eq:ham} leaves the spaces of functions which are even and odd with respect to $x_d$ invariant and on the former subspace it is unitarily equivalent to $-\Delta$ in $L^2(\R^d_+)$ with boundary condition $\partial\psi/\partial\nu = -(1/2)\sigma\psi$.
\end{remark}


\subsection*{Uniform Sobolev inequalities}

In this section we prove a Sobolev inequality for functions on $\R^N$. (Later on, in the proof of Theorems \ref{main}, \ref{main.5} and \ref{main1} we will choose $N=d-1$.) The inequality involves the operator $\sqrt{-\Delta-z}$ and the crucial point is that the constant in the inequality depends only on $|z|$ but not on the argument of $z$. Such uniform Sobolev inequalities go back to Kenig, Ruiz and Sogge \cite{KRS} for $-\Delta-z$ and, in fact, our theorem follows by modifying their proof.

Since it comes be at no extra effort, we deal with the operators $(-\Delta-z)^s$ for arbitrary $0<s\leq(N+1)/2$. This operator acts as multiplication by $(\xi^2-z)^s$ in Fourier space. We will assume that $z\in\C\setminus[0,\infty)$, so $\xi^2-z\in\C\setminus(-\infty,0]$ for all $\xi\in\R^d$ and we can define $(\xi^2-z)^s = \exp(s\log(\xi^2-z))$ with the principal branch of the logarithm on $\C\setminus (-\infty,0]$.

\begin{proposition}\label{unifsob}
Let $0<s\leq (N+1)/2$ and assume that
$$
\begin{cases}
2N/(N+2s)\leq p \leq 2(N+1)/(N+1+2s) & \text{if}\ s<N/2 \,,\\
1 < p \leq 2(N+1)/(N+1+2s) & \text{if}\ s= N/2 \,,\\
1 \leq p \leq 2(N+1)/(N+1+2s) & \text{if}\ s>N/2 \,.
\end{cases}
$$
Then there is a constant $C_{N,p,s}$ such that for all $u\in W^{2s,p}(\R^N)$ and $z\in\C\setminus[0,\infty)$,
\begin{equation}
\label{eq:unifsob}
\|u\|_{p'} \leq C_{N,p,s}\, |z|^{-(Np+2ps - 2N)/(2p)} \left\|\left(-\Delta-z\right)^s u \right\|_p \,.
\end{equation}
Moreover, if $2(N+1)/(N+1+2s)<p\leq 2$, there is a constant $C_{N,p,s}$ such that for all $u\in W^{2s,p}(\R^N)$ and $z\in\C\setminus[0,\infty)$,
\begin{equation}
\label{eq:unifsob2}
\|u\|_{p'} \leq C_{N,p,s}\, \delta(z)^{-(Np+2ps-2N-2+p)/(2p)}\, |z|^{-(2-p)/(2p)} \left\|\left(-\Delta-z\right)^s u \right\|_p \,.
\end{equation}
\end{proposition}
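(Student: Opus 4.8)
The plan is to follow the Kenig--Ruiz--Sogge strategy via analytic interpolation and the Stein--Tomas restriction theorem, but keeping careful track of the homogeneity so that the dependence on $z$ is through $|z|$ and $\delta(z)$ separately. First I would reduce to $|z|=1$: writing $z = |z| \omega$ with $|\omega|=1$ and rescaling $u(x) \mapsto u(|z|^{-1/2} x)$ turns $(-\Delta-z)^s$ into $|z|^s (-\Delta-\omega)^s$ and produces exactly the power $|z|^{-(Np+2ps-2N)/(2p)}$ claimed in \eqref{eq:unifsob} (and an analogous reduction handles \eqref{eq:unifsob2}, where $\delta(z)$ rescales to $\delta(\omega)=|\im\omega|$ or $|\omega|$ near $\pm1$). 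So it suffices to bound the operator $m_s^\omega(\xi) := (\xi^2-\omega)^{-s}$ as a Fourier multiplier from $L^p$ to $L^{p'}$ uniformly in $\omega$ on the unit circle, with the stated blow-up of the constant as $\omega \to 1$ governed by $\delta(\omega)$.

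Next I would construct an analytic family of operators $T_\zeta$ with symbol essentially $(\xi^2-\omega)^{-\zeta} \chi(\xi)$ plus a complementary piece, $\re\zeta$ ranging over a strip, so that Stein's complex interpolation theorem applies. At the endpoint $\re\zeta = (N+1)/2$ one wants the $L^1 \to L^\infty$ bound, which amounts to an $L^\infty$ bound on the kernel; the kernel of $(\xi^2-\omega)^{-\zeta}$ is, up to the Bessel-type asymptotics, comparable to the resolvent kernel and one gets decay like $|x|^{-(N-1)/2}$ times an oscillation $e^{i\sqrt{\omega}|x|}$ — here the distance of $\sqrt\omega$ to the real axis, i.e.\ a power of $\delta(\omega)$, controls the size. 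At the other endpoint $\re\zeta = 0$, Plancherel gives the trivial $L^2\to L^2$ bound. The Stein--Tomas $L^2$ restriction/extension estimate for the sphere (and its perturbations to the complex ellipsoid $\{\xi^2=\omega\}$) enters when $p$ is at the Stein--Tomas exponent $2(N+1)/(N+2s+1)$; for larger $p$ up to $2$ one loses the full restriction estimate and this is exactly where the weaker bound \eqref{eq:unifsob2} with the extra $\delta(z)$ factor appears, because one must trade the failure of restriction for a derivative/localization near the singular set at a cost measured by $\delta$.

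The two regimes of the proposition (the sharp bound \eqref{eq:unifsob} for $p$ up to the Stein--Tomas exponent, and the lossy bound \eqref{eq:unifsob2} beyond it) should be handled by splitting the multiplier dyadically in the quantity $|\xi^2-\omega|$: the region $|\xi^2-\omega| \gtrsim 1$ is harmless and handled by a Mikhlin-type argument uniformly in $\omega$, while the region $|\xi^2-\omega| \lesssim 1$ concentrates near the complexified sphere and is where restriction theory and the $\delta(\omega)$-dependence live; summing the dyadic pieces gives a convergent geometric series precisely in the stated $p$-ranges, and outside the Stein--Tomas exponent the summation forces the extra negative power of $\delta(z)$. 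I also need to check the cases $s\ge N/2$ separately since then $W^{2s,p}\hookrightarrow L^{p'}$ can already hold by Sobolev embedding for small $p$, and the endpoint arithmetic of the exponents $-(Np+2ps-2N)/(2p)$ must be verified to be consistent (nonpositive in the relevant range, zero exactly at the scaling-critical $p$).

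The main obstacle I expect is the uniform-in-$\omega$ control of the oscillatory kernel near $\omega=1$, i.e.\ proving the right power of $\delta(z)$ in \eqref{eq:unifsob2}: one must show that the constant in the $L^p\to L^{p'}$ bound degenerates no worse than $\delta(z)^{-(Np+2ps-2N-2+p)/(2p)}$ as $z$ approaches the positive half-line, which requires a genuinely quantitative stationary-phase / oscillatory-integral estimate for the kernel of $(\xi^2-z)^{-s}$ with explicit dependence on how close $\sqrt z$ is to $\R$, rather than the qualitative resolvent bounds that suffice away from the spectrum. Making that estimate clean — and checking it interfaces correctly with the complex interpolation so that no extra loss is incurred — is the technical heart of the argument.
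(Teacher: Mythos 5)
Your plan for \eqref{eq:unifsob} is, in outline, the paper's (and Kenig--Ruiz--Sogge's) argument: rescale so that the $z$-dependence is through $|z|$, run Stein's complex interpolation for the analytic family $(-\Delta-z)^{-\zeta}$ between the trivial $L^2\to L^2$ bound at $\re\zeta=0$ and an $L^1\to L^\infty$ kernel bound at large $\re\zeta$. Two points in your sketch need correction, though. First, the $L^1\to L^\infty$ endpoint bound of KRS is \emph{uniform} in the argument of $z$ --- no power of $\delta(z)$ enters at $\re\zeta$ near $(N+1)/2$; that uniformity is precisely the content of the uniform Sobolev inequality, and your remark that the distance of $\sqrt{z}$ to the real axis ``controls the size'' there points in the wrong direction. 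Second, at the Sobolev endpoint $p=2N/(N+2s)$ with $s<N/2$ the kernel bound needed at $\re\zeta=N/2$ only holds after inserting the regularizing factor $1/\Gamma((N-2\zeta)/2)$ into the analytic family (as in KRS and in the paper); your proposal does not address this endpoint.

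The genuine gap is \eqref{eq:unifsob2}. You treat it as the technical heart, proposing a dyadic decomposition in $|\xi^2-\omega|$ together with a quantitative oscillatory-integral estimate tracking how close $\sqrt{z}$ is to $\R$, and you explicitly leave that estimate open --- so as written the proposal contains no proof of the second inequality, only a (rather delicate and unverified) program for one. Moreover, no such machinery is needed: since $\xi^2\in[0,\infty)$ for every $\xi\in\R^N$, one has $|\xi^2-z|\ge\delta(z)$, so Plancherel gives the elementary bound $\|u\|_2\le\delta(z)^{-s}\|(-\Delta-z)^s u\|_2$, and standard Riesz--Thorin interpolation between this $L^2\to L^2$ estimate and \eqref{eq:unifsob} at the Stein--Tomas exponent $p_0=2(N+1)/(N+1+2s)$ (where the right-hand side of \eqref{eq:unifsob} is $C|z|^{-s/(N+1)}$) produces exactly the exponents $-(Np+2ps-2N-2+p)/(2p)$ of $\delta(z)$ and $-(2-p)/(2p)$ of $|z|$ for all $2(N+1)/(N+1+2s)<p\le 2$. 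This one-line interpolation is how the paper proves \eqref{eq:unifsob2}; without it, or a completed version of your oscillatory-integral scheme with the precise $\delta(z)$ power, the second half of the proposition remains unproved.
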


We recall that $\delta(z)$ appearing in \eqref{eq:unifsob2} was defined in \eqref{eq:deltaz}. Moreover, $p'=p/(p-1)$.

\begin{proof}
Let $\zeta\in\C$ with $\re\zeta\geq 0$ and consider the operator
$$
T_\zeta(z):= e^{\zeta^2}\, \left( -\Delta - z \right)^{-\zeta} = e^{\zeta^2}\, e^{-\zeta \log(-\Delta-z)} \,,
$$
which is again defined as a multiplier in Fourier space with the same convention for the branch of the logarithm. Note that this is essentially the family of operators from \cite[Proof of Thm.\!\! 2.3]{KRS} with $\zeta=-\lambda$. Clearly, by bounding the multiplier in Fourier space, one finds
\begin{equation}
\label{eq:222}
\left\| T_\zeta(z) \right\|_{L^2\to L^2} \leq A
\qquad\text{if}\ \re \zeta =0
\end{equation}
with a constant $A$ depending only on $N$. Moreover, it is shown in \cite[Proof of Thm.~2.3]{KRS} that
\begin{equation}
\label{eq:12infty}
\left\| T_\zeta(z) \right\|_{L^1\to L^\infty} \leq B_{\re\zeta} |z|^{-(2\re\zeta-N)/2}
\qquad\text{if}\ N/2<\re \zeta\leq (N+1)/2
\end{equation}
with a constant $B_{\re\zeta}$ depending only on $N$ and $\re\zeta$. (Strictly speaking, this bound was only shown there with a constant independent of $z$ for $|z|\geq 1$, but the stated bound simply follows from this by scaling. Moreover, the assumption $N\geq 3$ in \cite{KRS} is irrelevant for the proof of \eqref{eq:12infty}.)

Since $T_s(z)$ coincides, up to a multiplicative constant, with the inverse of the operator $(-\Delta-z)^s$, if $s>N/2$ and $p=1$, we can choose $\zeta=s$ in \eqref{eq:12infty} and obtain the bound in the proposition.

For $p>1$ as in the theorem, with the extra assumption $p>2N/(N+2s)$ if $s<N/2$, we define $t:=sp/(2-p)$ and note that $N/2<t\leq (N+1)/2$ and $t>s$. Since the operators $T_\zeta(z)$ depend analytically on $\zeta$, we can use complex interpolation with the lines $\re\zeta=0$ and $\re\zeta=t$ and obtain
\begin{equation}
\label{eq:p2pprime}
\left\| T_{s}(z) \right\|_{L^p\to L^{p'}} \leq A^{2(p-1)/p} B_{ps/(2-p)}^{(2-p)/p} |z|^{-(Np+2ps-2N)/(2p)} \,,
\end{equation}
which again gives the claimed bounds.

For the first part of the proposition it remains to prove the bound for $p=2N/(N+2s)$ and $s<N/2$. Again as in \cite[Proof of Thm.\! 2.3]{KRS} we consider
$$
\tilde T_\zeta(z):= \frac{e^{\zeta^2}}{\Gamma((N-2\zeta)/2} \left( -\Delta - z \right)^{-\zeta} = \frac{e^{\zeta^2}}{\Gamma((N-2\zeta)/2} e^{-\zeta \log(-\Delta-z)} \,.
$$
Bound \eqref{eq:222} remains valid for $\tilde T_\zeta(z)$ and, as shown in \cite[Proof of Thm. 2.3]{KRS}, bound \eqref{eq:12infty} holds even for $\re\zeta=N/2=:t$. If $s<N/2$ one has $0<s<t$ and therefore one can again use complex interpolation to deduce an $L^{2N/(N+2s)}\to L^{2N/(N-2s)}$ bound for $\tilde T_s(z)$. This completes the proof of the proposition.

To prove the second part of the proposition we note that
$$
\|u\|_2 \leq \delta(z)^{-s} \left\| \left(-\Delta-z\right)^s u \right\|_2 \,.
$$
This, together with \eqref{eq:unifsob} for $p=2(N+1)/(N+1+2s)$, implies \eqref{eq:unifsob2} by standard (Riesz--Thorin) complex interpolation.
\end{proof}


\subsection*{Bound on the Birman--Schwinger operator}

Let us give the details of the definition of the operator \eqref{eq:ham} using the method from \cite[Sec.\! 4]{Fr3}.

We consider the operator $H_0:=-\Delta$ in the Hilbert space $\mathcal H :=L^2(\R^d)$ with form domain $H^1(\R^d)$. Moreover, let $\mathcal G:=L^2(\R^{d-1})$ and consider the operators $G$ and $G_0$ from $\mathcal H$ to $\mathcal G$ with domain $H^1(\R^d)$ defined by
$$
(G_0 \psi)(x') := \sqrt{\sigma(x')}\, \psi(x',0) \,,
\qquad
(G \psi)(x') := \sqrt{|\sigma(x')|}\, \psi(x',0) \,.
$$
(Here we write $\sqrt{\sigma(x')}=\sigma(x')/\sqrt{|\sigma(x')|}$ if $\sigma(x')\neq 0$ and $\sqrt{\sigma(x')}=0$ otherwise.) We claim that, if $\sigma\in L^p(\R^{d-1})$ with $1<p<\infty$ if $d=2$ and $d-1\leq p<\infty$ if $d\geq 3$, then
$$
G_0(H_0+1)^{-1/2}
\qquad\text{and}\qquad
G(H_0+1)^{-1/2}
\qquad\text{are compact}.
$$
When $\sigma$ is bounded and has support in a set of finite measure, this follows from the trace version of Rellich's compactness theorem, see, e.g., \cite[Thm.\!\! 6.3]{AF}. By the trace version of Sobolev's embedding theorem (see, e.g., \cite[Thm.\!\! 4.12]{AF}) and an argument as in \cite[Lem.\!\! 4.3]{Fr3} we obtain the assertion in the general case.

Thus, we have verified the assumptions of \cite[Lem.\!\! B.1]{Fr3} and we infer that the quadratic form \eqref{eq:quadform}, which is the same as $\|H_0^{1/2} \psi\|^2 + (G\psi,G_0\psi)$, is closed and sectorial and generates an $m$-sectorial operator $H$. Moreover, let $z\in\C\setminus[0,\infty)=\rho(H_0)$ and define the Birman--Schwinger operator
\begin{equation}
\label{eq:bs}
K(z) = G_0(H_0-z)^{-1}G^*
\qquad\text{in}\ L^2(\R^{d-1}) \,.
\end{equation}
Strictly speaking, since in our case the operators $G$ and $G_0$ are not closable, the operator $K(z)$ is defined as
$$
K(z) = \left( G_0 (H_0+1)^{-1/2} \right) (H_0+1)(H_0-z)^{-1}  \left( G (H_0+1)^{-1/2} \right)^* \,.
$$
The following \cite[Lem.\!\! B.1]{Fr3} is a version of the Birman--Schwinger principle.

\begin{lemma}
\label{bsprinc}
Let $z\in\C\setminus[0,\infty)$, then $1+K(z)$ is boundedly invertible if and only if $z\in\rho(H)$.
\end{lemma}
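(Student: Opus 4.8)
This lemma is \cite[Lem.\ B.1]{Fr3} specialized to the present $H_0$, $G$, $G_0$, so my plan is to indicate how the usual Birman--Schwinger argument survives the fact that $G$ and $G_0$ are not closable. The device is to route every manipulation through the genuinely bounded operators
\[
\A_0:=\overline{G_0(H_0+1)^{-1/2}},\qquad \A:=\overline{G(H_0+1)^{-1/2}}
\]
--- compact by the discussion preceding the lemma --- together with bounded functions of $H_0$. With $R_0(z):=(H_0-z)^{-1}$ and $B(z):=(H_0+1)R_0(z)=1+(1+z)R_0(z)$, which is bounded, analytic on $\rho(H_0)=\C\setminus[0,\infty)$, and satisfies $B(z)^*=B(\bar z)$, one has $K(z)=\A_0 B(z)\A^*$; in particular $K(z)$ is compact. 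I will also use that $Gu=\A(H_0+1)^{1/2}u$ and $G_0u=\A_0(H_0+1)^{1/2}u$ for $u$ in the form domain $H^1(\R^d)$, and --- crucially --- that $\dom H\subseteq H^1(\R^d)=\dom((H_0+1)^{1/2})$, since the form of $H$ has form domain $H^1(\R^d)$; consequently, via the Gelfand triple $H^1(\R^d)\subseteq L^2(\R^d)\subseteq H^{-1}(\R^d)$, the operator $C(z):=(H_0+1)^{1/2}(H-z)^{-1}(H_0+1)^{1/2}$ is bounded for every $z\in\rho(H)$.

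To prove ``$1+K(z)$ boundedly invertible $\Rightarrow z\in\rho(H)$'' I would show that $H-z:\dom H\to L^2(\R^d)$ is bijective. For injectivity: if $\psi\in\dom H$ and $(H-z)\psi=0$, then testing the form identity for $H$ against $u=R_0(\bar z)h$, $h\in L^2(\R^d)$, and using $B(z)^*=B(\bar z)$, one obtains $(1+K(z))\,G_0\psi=0$ in $L^2(\R^{d-1})$, hence $G_0\psi=0$; this makes the interaction term in the form vanish (recall $|G\psi|=|G_0\psi|$ pointwise), so $H_0\psi=z\psi$, and $z\notin[0,\infty)=\spec H_0$ forces $\psi=0$. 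For surjectivity I would introduce the candidate resolvent
\[
\tilde R(z):=R_0(z)-(H_0+1)^{-1/2}B(z)\A^*\,(1+K(z))^{-1}\,\A_0 B(z)(H_0+1)^{-1/2},
\]
which is bounded with range in $H^1(\R^d)$, and check that $\tilde R(z)h\in\dom H$ and $(H-z)\tilde R(z)h=h$ for every $h$; this is a direct computation of $\mathfrak h[\tilde R(z)h,u]$, $u\in H^1(\R^d)$, expanding $G$ and $G_0$ through $\A$, $\A_0$ and using $B(z)=1+(1+z)R_0(z)$ and $K(z)=\A_0 B(z)\A^*$, which after the $\A$-bookkeeping reduces to the algebraic identity $(1+G^*G_0R_0(z))G^*=G^*(1+K(z))$ absorbing the middle factor $(1+K(z))^{-1}$. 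It follows that $z\in\rho(H)$ and $(H-z)^{-1}=\tilde R(z)$.

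For the converse, since $K(z)$ is compact, the Fredholm alternative reduces the claim to showing $1+K(z)$ is injective when $z\in\rho(H)$. If $(1+K(z))\phi=0$, I would set $\psi:=-R_0(z)G^*\phi=-(H_0+1)^{-1/2}B(z)\A^*\phi\in H^1(\R^d)$; running the computation of the previous paragraph in reverse shows $\psi\in\dom H$ and $(H-z)\psi=0$, so $\psi=0$ because $z\in\rho(H)$, and then $\phi=-K(z)\phi=-G_0R_0(z)G^*\phi=G_0\psi=0$. Hence $1+K(z)$ is injective and therefore boundedly invertible --- with inverse $1-\A_0 C(z)\A^*$, as one can also verify directly from the factorized resolvent identity $C(z)=B(z)-B(z)\A^*\A_0 C(z)$, though this is not needed for the statement.

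The difficulty lies not in any single estimate but in the bookkeeping imposed by the non-closability of $G$ and $G_0$: each term of the formal manipulation must be reassembled from $\A_0$, $\A$, their adjoints, bounded functions of $H_0$, and --- in the converse --- $(H-z)^{-1}$, and one must check that the operators so produced, chiefly $C(z)$ and $\tilde R(z)$, are genuinely bounded. The one structural fact that must be invoked explicitly is the coincidence of the form domains of $H$ and $H_0$, namely $H^1(\R^d)$ (equivalently, relative form-compactness of the perturbation with relative bound $0$, which is what the compactness of $G(H_0+1)^{-1/2}$ and $G_0(H_0+1)^{-1/2}$ provides); once that is in hand, both implications are routine resolvent-identity computations, which is precisely why \cite[Lem.\ B.1]{Fr3} can be quoted directly.
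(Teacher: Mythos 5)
Your argument is correct and is essentially the argument behind the result the paper relies on: the paper does not prove this lemma at all but simply quotes it as \cite[Lem.\ B.1]{Fr3}, and your factorized Birman--Schwinger computation (form identity plus the explicit candidate resolvent $\tilde R(z)$ for one direction, compactness of $K(z)$ and the Fredholm alternative for the other, with every manipulation routed through the bounded operators $G_0(H_0+1)^{-1/2}$ and $G(H_0+1)^{-1/2}$) is precisely the standard proof of that abstract lemma in the non-closable setting. The only assertion that would merit an extra line is the boundedness of $C(z)=(H_0+1)^{1/2}(H-z)^{-1}(H_0+1)^{1/2}$ for arbitrary $z\in\rho(H)$ (it follows, e.g., from the resolvent identity together with a Lax--Milgram bound at one point of coercivity and the adjoint form), but, as you note, this is only used for the explicit formula for $(1+K(z))^{-1}$ and not for the statement itself.
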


\begin{remark}
In passing we mention that \cite[Prop.\!\! B.2]{Fr3} yields that
\begin{align*}
& [0,\infty) = \left\{ z\in\C:\ \ran(H-z) \ \text{is not closed} \right\} \\
& \qquad\qquad\cup \left\{ z\in\C:\ \dim\ker(H-z)=\codim\ran(H-z) = \infty\right\}
\end{align*}
and
\begin{align*}
& \sigma(H)\setminus [0,\infty) \\
& = \left\{ z\in\C:\ \ran(H-z) \ \text{is closed and}\ 0<\dim\ker(H-z)=\codim\ran(H-z) <\infty\right\}.
\end{align*}
Moreover, the latter set is at most countable and consists of eigenvalues of finite algebraic multiplicities which are isolated in $\sigma(H)$. These facts, however, will not be relevant for the proof of Theorems \ref{main} and \ref{main1}.
\end{remark}

Our next goal is to find a convenient expression for the Birman--Schwinger operator. If we denote by $\Gamma$ the trace operator which restricts a function on $\R^d$ to $\R^{d-1}\times\{0\}$, then we have
$$
G_0 = \sqrt\sigma \,\Gamma \,,
\qquad
G = \sqrt{|\sigma|} \,\Gamma \,.
$$
Moreover, let us denote the Laplacian on $\R^{d-1}$ by $-\Delta'$.

\begin{lemma}\label{dn}
Let $z\in\C\setminus[0,\infty)$. Then
$$
\Gamma (-\Delta-z)^{-1}\Gamma^* = \frac12 \left( -\Delta'-z \right)^{-1/2} \,.
$$
\end{lemma}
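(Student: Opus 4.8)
The plan is to compute the kernel of $\Gamma(-\Delta-z)^{-1}\Gamma^*$ directly by passing to the Fourier transform in the variables $x'\in\R^{d-1}$ tangential to the hyperplane, leaving the normal variable $x_d$ in physical space. Writing $\xi'$ for the dual variable to $x'$, the operator $(-\Delta-z)^{-1}$ becomes, after this partial Fourier transform, a direct integral over $\xi'$ of the one-dimensional resolvents $(-\partial_{x_d}^2 + |\xi'|^2 - z)^{-1}$ acting in $L^2(\R_{x_d})$. The operator $\Gamma$ is evaluation at $x_d=0$, and $\Gamma^*$ is (in this representation) the map that takes $f(\xi')$ to the distribution $f(\xi')\,\delta(x_d)$; hence $\Gamma(-\Delta-z)^{-1}\Gamma^*$ acts, fiberwise in $\xi'$, as multiplication by the value at $x_d = y_d = 0$ of the Green's function of $-\partial_{x_d}^2 + (|\xi'|^2-z)$ on the line.

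The key step is therefore the one-dimensional computation: for $w\in\C\setminus(-\infty,0]$, the operator $(-\partial_t^2 + w)^{-1}$ on $L^2(\R)$ has integral kernel $\tfrac{1}{2\sqrt w}e^{-\sqrt w\,|t-t'|}$, where $\sqrt{w}$ is the principal branch (so $\re\sqrt w>0$, which makes the kernel decay and square-integrable). Evaluating at $t=t'=0$ gives $\tfrac{1}{2\sqrt w}$. Applying this with $w = |\xi'|^2 - z$, and noting that $z\in\C\setminus[0,\infty)$ guarantees $|\xi'|^2 - z\in\C\setminus(-\infty,0]$ for all $\xi'$ so that the branch is unambiguous, we find that $\Gamma(-\Delta-z)^{-1}\Gamma^*$ is multiplication in Fourier space by $\tfrac12(|\xi'|^2-z)^{-1/2}$. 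Since $(-\Delta'-z)^{-1/2}$ is by definition multiplication by $(|\xi'|^2-z)^{-1/2}$ with the same principal branch (as set up in the "Uniform Sobolev inequalities" section), this is exactly $\tfrac12(-\Delta'-z)^{-1/2}$, which is the claim.

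I would present this with a brief justification that the formal manipulations are legitimate: $\Gamma$ is bounded from $H^s(\R^d)$ to $L^2(\R^{d-1})$ for $s>1/2$ by the trace theorem, and $(-\Delta-z)^{-1}$ maps $L^2(\R^d)$ into $H^2(\R^d)$, so $\Gamma(-\Delta-z)^{-1}$ is bounded $L^2(\R^d)\to L^2(\R^{d-1})$ and $\Gamma(-\Delta-z)^{-1}\Gamma^*$ is a bounded operator on $L^2(\R^{d-1})$ whose identification with a Fourier multiplier is justified by testing against Schwartz functions. The main obstacle — really the only point requiring care rather than routine bookkeeping — is getting the branch of the square root right and confirming that it is consistent across the three places it appears: the principal branch $\re\sqrt w>0$ is forced by the requirement that $e^{-\sqrt w|t|}$ be the $L^2$ (decaying) solution, it is well-defined precisely because $z\notin[0,\infty)$, and it matches the convention fixed earlier for $(-\Delta'-z)^{-1/2}$. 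Once that is pinned down the identity follows immediately.
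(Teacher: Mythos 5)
Your proposal is correct and follows essentially the same route as the paper: both pass to Fourier variables in $x'$ and evaluate the contribution of the normal direction, your one-dimensional Green's function at coincident points $\tfrac{1}{2\sqrt{w}}$ being exactly the paper's integral $\tfrac{1}{2\pi}\int_\R \frac{d\xi_d}{\xi_d^2+w}$ with $w=(\xi')^2-z$ and $\re\sqrt{w}>0$. Your attention to the branch of the square root and to the boundedness of $\Gamma(-\Delta-z)^{-1}\Gamma^*$ matches the conventions fixed in the paper, so nothing is missing.
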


\begin{proof}
Since $(-\Delta-z)^{-1}$ has integral kernel
$$
(2\pi)^{-d} \int_{\R^d} \frac{e^{i\xi\cdot(x-y)}}{\xi^2-z}\,d\xi \,,
\qquad x,y\in\R^d \,,
$$
the operator $\Gamma (-\Delta-z)^{-1}\Gamma^*$ has integral kernel
$$
(2\pi)^{-d} \int_{\R^{d-1}}\int_\R \frac{e^{i\xi'\cdot(x'-y')}}{(\xi')^2+\xi_d^2-z}\,d\xi'd\xi_d \,,
\qquad x',y'\in\R^{d-1} \,.
$$
The integral with respect to $\xi_d$ can be computed using
$$
\int_\R \frac{d\xi_d}{\xi_d^2 + b^2} = \frac{\pi}{b}
\qquad\text{if}\ \re b>0 \,.
$$
Thus, $\Gamma (-\Delta-z)^{-1}\Gamma^*$ has integral kernel
$$
\frac12 (2\pi)^{-d+1} \int_{\R^{d-1}}\frac{e^{i\xi'\cdot(x'-y')}}{\sqrt{(\xi')^2-z}}\,d\xi' \,,
\qquad x',y'\in\R^{d-1} \,,
$$
with the branch of the square root as described before Theorem \ref{unifsob}. This coincides with the integral kernel of the operator $(1/2)(-\Delta'-z)^{-1/2}$.
\end{proof}

\begin{remark}
One can also show that $\psi$ is an eigenfunction of \eqref{eq:ham} corresponding to an eigenvalue $E$ iff $\psi(x) = (\exp(-|x_d|\sqrt{-\Delta'-E})\phi)(x')$, where $\phi=\Gamma\psi$ satisfies $(\sqrt{-\Delta'-E}+\sigma/2)\phi=0$. This is closely related to the harmonic extension and the Dirichlet-to-Neumann operator for the Laplacian on $L^2(\R^d_+)$; see also Remark \ref{robin}. This observation was also crucial in \cite{FSh}.
\end{remark}

Combining Lemma \ref{dn} with Theorem \ref{unifsob} we obtain

\begin{corollary}\label{bs}
Let $0<\gamma\leq 1/2$ if $d=2$ and $0\leq\gamma\leq 1/2$ if $d\geq 3$. Then there is a constant $C_{\gamma,d}$ such that for all $\alpha_1,\alpha_2\in L^{2(2\gamma+d-1)}(\R^{d-1})$,
$$
\left\| \alpha_1 \Gamma (-\Delta-z)^{-1} \Gamma^* \alpha_2 \right\| \leq C_{\gamma,d} |z|^{-\frac\gamma{2\gamma+d-1}} \|\alpha_1\|_{2(2\gamma+d-1)} \|\alpha_2\|_{2(2\gamma+d-1)} \,.
$$
Moreover, if $\gamma>1/2$ there is a constant $C_{\gamma,d}$ such that for all $\alpha_1,\alpha_2\in L^{2(2\gamma+d-1)}(\R^{d-1})$,
$$
\left\| \alpha_1 \Gamma (-\Delta-z)^{-1} \Gamma^* \alpha_2 \right\| \leq C_{\gamma,d} \delta(z)^{-\frac{2\gamma-1}{2(2\gamma+d-1)}} |z|^{-\frac 1{2(2\gamma+d-1)}} \|\alpha_1\|_{2(2\gamma+d-1)} \|\alpha_2\|_{2(2\gamma+d-1)} \,.
$$
\end{corollary}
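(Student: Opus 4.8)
The plan is to use Lemma~\ref{dn} to rewrite $\Gamma(-\Delta-z)^{-1}\Gamma^* = \tfrac12(-\Delta'-z)^{-1/2}$, which reduces the claim to an $L^2(\R^{d-1})\to L^2(\R^{d-1})$ operator bound on $\alpha_1(-\Delta'-z)^{-1/2}\alpha_2$, and then to deduce that bound from Proposition~\ref{unifsob} with $N=d-1$ and $s=1/2$ by factoring through $L^p$ and its dual. Concretely, I would set $p:=2(2\gamma+d-1)/(2\gamma+d)$, so that $2-p=2/(2\gamma+d)$, the dual exponent is $p'=2(2\gamma+d-1)/(2\gamma+d-2)$, and the Hölder exponent $r$ defined by $r^{-1}=p^{-1}-2^{-1}$ equals $2(2\gamma+d-1)$. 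For $f\in L^2$, Hölder gives $\|\alpha_2 f\|_p\le\|\alpha_2\|_{2(2\gamma+d-1)}\|f\|_2$; Proposition~\ref{unifsob} (with $N=d-1$, $s=1/2$) gives $\|(-\Delta'-z)^{-1/2}g\|_{p'}\le C|z|^{-\theta}\|g\|_p$ with $\theta=(Np+2ps-2N)/(2p)$; and, since $2^{-1}-(p')^{-1}=r^{-1}$, Hölder again gives $\|\alpha_1 h\|_2\le\|\alpha_1\|_{2(2\gamma+d-1)}\|h\|_{p'}$. Composing these three maps $L^2\to L^p\to L^{p'}\to L^2$ (using a routine density argument to promote the a priori estimate of Proposition~\ref{unifsob} to boundedness of $(-\Delta'-z)^{-1/2}$ from $L^p$ to $L^{p'}$) yields the asserted inequality once $\theta$ is simplified.

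The step requiring care is checking that this value of $p$ lies in the admissible range of Proposition~\ref{unifsob}. For $d\ge3$ and $0\le\gamma\le1/2$ one has $s=1/2<N/2=(d-1)/2$, and an elementary computation shows $2(d-1)/d\le p\le 2d/(d+1)$, i.e.\ $2N/(N+2s)\le p\le 2(N+1)/(N+1+2s)$, with equality on the left only when $\gamma=0$ and on the right only when $\gamma=1/2$; for $d=2$ one has $s=N/2$ and the same computation gives $1<p\le4/3$, which is exactly the stated range in that case. For $\gamma>1/2$ the same $p$ instead satisfies $2(N+1)/(N+1+2s)<p\le 2$ (the left inequality being the same elementary computation, the right one automatic), so one uses \eqref{eq:unifsob2} in place of \eqref{eq:unifsob}.

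Finally the exponents come out by direct arithmetic: with $N=d-1$, $s=1/2$ one gets $\theta=(Np+2ps-2N)/(2p)=d/2-(d-1)/p=\gamma/(2\gamma+d-1)$, giving the first bound; and for $\gamma>1/2$ the exponent of $\delta(z)$ in \eqref{eq:unifsob2} is $(Np+2ps-2N-2+p)/(2p)=(d+1)/2-d/p=(2\gamma-1)/(2(2\gamma+d-1))$ while that of $|z|$ is $(2-p)/(2p)=p^{-1}-2^{-1}=1/(2(2\gamma+d-1))$, giving the second. I expect no essential difficulty here beyond Proposition~\ref{unifsob} itself: the main obstacle is the bookkeeping of the Hölder exponents, the verification that $p$ is admissible in every dimension (in particular handling the endpoint cases $\gamma=0$ for $d\ge3$ and $s=N/2$ for $d=2$), and the algebraic simplification of the two exponents of $|z|$ and $\delta(z)$.
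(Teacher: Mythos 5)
Your proposal is correct and follows essentially the same route as the paper: reduce via Lemma \ref{dn} to the operator $\tfrac12\,\alpha_1(-\Delta'-z)^{-1/2}\alpha_2$, factor $L^2\to L^p\to L^{p'}\to L^2$ by H\"older, and apply Proposition \ref{unifsob} with $N=d-1$, $s=1/2$ and $p=2(2\gamma+d-1)/(2\gamma+d)$, using \eqref{eq:unifsob} for $\gamma\leq 1/2$ and \eqref{eq:unifsob2} for $\gamma>1/2$. Your exponent arithmetic and the verification of the admissible range of $p$ (including the endpoints $\gamma=0$ for $d\geq3$ and the exclusion of $\gamma=0$ for $d=2$) match the paper's argument, just spelled out in more detail.
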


\begin{proof}
According to Lemma \ref{dn},
$$
\alpha_1 \Gamma (-\Delta-z)^{-1} \Gamma^* \alpha_2
= \frac 12 \ \alpha_1 \left(-\Delta'-z\right)^{-1/2} \alpha_2 \,,
$$
and, for any $1\leq p\leq\infty$,
$$
\left\| \alpha_1 \left(-\Delta'-z\right)^{-1/2} \alpha_2 \right\|
\leq \| \alpha_1 \|_{L^{p'}\to L^2}
\left\| \left(-\Delta'-z\right)^{-1/2} \right\|_{L^p\to L^{p'}} \|\alpha_2\|_{L^2\to L^p} \,.
$$
By H\"older's inequality, if $1\leq p\leq 2$,
$$
\|\alpha_1\|_{L^{p'}\to L^2} = \|\alpha_1\|_{2p/(2-p)}
\qquad\text{and}\qquad
\|\alpha_2\|_{L^2\to L^p} = \|\alpha_2\|_{2p/(2-p)} \,.
$$
We bound the norm of $(-\Delta'-z)^{-1}$ from $L^p$ to $L^{p'}$ by Theorem \ref{unifsob} with $N=d-1$. Bound \eqref{eq:unifsob} holds if $1<p\leq 4/3$ for $d=2$ and if $2(d-1)/d\leq p \leq 2d/(d+1)$ if $d\geq 3$. These conditions correspond precisely to our assumptions on $\gamma$ in the first part if we pick $p$ such that $2p/(2-p) = 2(2\gamma+d-1)$. Similarly, bound \eqref{eq:unifsob2} holds if $p>2d/(d+1)$ which corresponds to $\gamma>1/2$.
\end{proof}


\subsection*{Proof of Theorems \ref{main}, \ref{main.5} and \ref{main1}}

Let $E$ be an eigenvalue of the operator \eqref{eq:ham}. We begin with the case $E\in\C\setminus[0,\infty)$, where we use the argument of \cite{AAD}; see also \cite{F}. Then, by the Birman--Schwinger principle (Lemma \ref{bsprinc}), $1+K(E) = 1+ \sqrt{\sigma}\Gamma(-\Delta-E)^{-1}\Gamma^* \sqrt{|\sigma|}$ is not boundedly invertible and therefore $\|K(E)\|\geq 1$. Combining this with the upper bound on $\|K(E)\|$ in the first part of Corollary \ref{bs} (with $\alpha_1=\sqrt\sigma$ and $\alpha_2=\sqrt{|\sigma|}$), we obtain
$$
1\leq C_{\gamma,d} |E|^{-\gamma/(2\gamma+d-1)} \|\sigma\|_{2\gamma+d-1} \,.
$$
This is the claimed bound on $|E|^\gamma$ for $0<\gamma\leq 1/2$ in Theorem \ref{main} and the condition for the absence of eigenvalues for $\gamma=0$ in Theorem \ref{main1}. Using the second part of Corollary~\ref{bs} instead, we obtain the claimed bound on $|E|^{1/2} \delta(E)^{(2\gamma-1)/2}$ for $\gamma>1/2$ in Theorem \ref{main.5}.

Now let $E\in[0,\infty)$ and denote by $\psi$ a corresponding eigenfunction. We use an approximation argument similar to \cite{FS}. For $\epsilon>0$ let
$$
\phi_\epsilon := G_0(-\Delta-E-i\epsilon)^{-1}(-\Delta-E)\psi \,,
$$
which is well-defined since $\psi\in H^1(\R^d)$. We claim that $\phi_\epsilon\to G_0\psi$ weakly in $L^2(\R^{d-1})$ as $\epsilon\to 0$. (Note that $G_0\psi\in L^2(\R^{d-1})$ is well-defined since $\psi\in H^1(\R^d)$.) In fact, by dominated convergence in Fourier space we conclude that for any $f\in L^2(\R^{d-1})$, as $\epsilon\to 0$,
\begin{align*}
(f,\phi_\epsilon) & = \left( \left( G_0 (-\Delta+1)^{-1/2} \right)^* f, (-\Delta-E-i\epsilon)^{-1}(-\Delta-E) (-\Delta+1)^{1/2}\psi \right) \\
& \to \left( \left( G_0 (-\Delta+1)^{-1/2} \right)^* f, (-\Delta+1)^{1/2}\psi \right) = (f,G_0\psi) \,.
\end{align*}

On the other hand, the eigenvalue equation for $\psi$ gives
$$
\phi_\epsilon = \left( G_0(-\Delta-E-i\epsilon)^{-1} G^* \right) \left( G_0\psi\right),
$$
and therefore, by Corollary \ref{bs},
$$
\|\phi_\epsilon\| \leq C_{\gamma,d} \left( E^2 + \epsilon^2 \right)^{-\gamma/(2(2\gamma+d-1))} \|\sigma\|_{2\gamma+d-1} \|G_0\psi\| \,.
$$
By weak semi-continuity of the norm we conclude that
$$
\|G_0\psi \| \leq \liminf_{\epsilon\to 0} \|\phi_\epsilon\| \leq \liminf_{\epsilon\to 0} C_{\gamma,d} \left( E^2 + \epsilon^2 \right)^{-\gamma/(2(2\gamma+d-1))} \|\sigma\|_{2\gamma+d-1} \|G_0\psi\| \,.
$$
Since $G_0\psi\not\equiv 0$ (otherwise $\psi$ would be an eigenfunction of $-\Delta$ with eigenvalue $E$), we finally obtain again
$$
1 \leq C_{\gamma,d} |E|^{-\gamma/(2\gamma+d-1)} \|\sigma\|_{2\gamma+d-1} \,,
$$
as claimed.


\subsection*{Uniform Sobolev inequalities in trace ideals}

By the argument in the proof of Corollary \ref{bs} we see that the uniform Sobolev inequality from Proposition \ref{unifsob} is equivalent to a bound of the operator norm of $\alpha_1 (-\Delta-z)^{-s} \alpha_2$ in terms of the $L^{2p/(2-p)}(\R^N)$-norms of $\alpha_1$ and $\alpha_2$ and an inverse power of $|z|$. In this section we improve this by showing that not only the operator norm, but also a trace ideal norm can be bounded in terms of the same quantities.

\begin{proposition}\label{unifsobti}
Let $0<s\leq (N+1)/2$ and assume that
$$
\begin{cases}
1\leq q \leq(N+1)/(2s) & \text{if}\ N<2s \,,\\
1< q\leq (N+1)/(2s) & \text{if}\ N=2s \,,\\
N/(2s)\leq q\leq (N+1)/(2s) & \text{if}\ N>2s \,.
\end{cases}
$$
In addition, if $N=1$ and $s\leq 1/2$ assume that $q<2$ and, if $N=2$ and $s\leq 1/2$ that $q>1/s$. Then there is a constant $C_{N,q,s}$ such that for all $\alpha_1,\alpha_2\in L^{2q}(\R^N)$ and $z\in\C\setminus[0,\infty)$,
$$
\left\| \alpha_1 \left(-\Delta-z\right)^{-s} \alpha_2 \right\|_r \leq C_{N,q,s} |z|^{-s+N/(2q)} \|\alpha_1\|_{2q} \|\alpha_2\|_{2q}
$$
with $r=2$ if $N=1$ and
$$
r= \max\left\{ (N-1)q/(N-qs), 2\right\} 
\qquad\text{if}\ N\geq 2 \,.
$$
\end{proposition}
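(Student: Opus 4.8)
The plan is to imitate the proof of Proposition~\ref{unifsob}, running a Stein interpolation in the auxiliary parameter $\zeta$, but now tracking Schatten norms rather than operator norms. By homogeneity we may assume $\|\alpha_1\|_{2q}=\|\alpha_2\|_{2q}=1$, so it suffices to bound $\|\alpha_1(-\Delta-z)^{-s}\alpha_2\|_r$. For $\re\zeta\ge0$ set
\[
\mathcal A_\zeta := |\alpha_1|^{\zeta/s}\sgn\alpha_1\ T_\zeta(z)\ |\alpha_2|^{\zeta/s}\sgn\alpha_2 , \qquad T_\zeta(z)=e^{\zeta^2}(-\Delta-z)^{-\zeta},
\]
so that $\mathcal A_s=e^{s^2}\alpha_1(-\Delta-z)^{-s}\alpha_2$, while on $\re\zeta=0$ the multipliers $|\alpha_j|^{i\tau/s}$ are unimodular and $\|\mathcal A_{i\tau}\|_\infty\le A$ by \eqref{eq:222}, uniformly in $\tau$ and $z$. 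Using $\|B\|_r=\sup\{|\tr(BY)|:\|Y\|_{r'}\le1\}$ and inserting a second analytic factor $|Y|^{w(\zeta)}$ ($w$ affine, $w(s)=1$, the two endpoint powers adjusted to land in the dual Schatten class), one reduces the bound on $\|\mathcal A_s\|_r$ to a Hadamard three-lines estimate for $\zeta\mapsto\tr(\mathcal A_\zeta|Y|^{w(\zeta)}V)$, $Y=V|Y|$, between $\re\zeta=0$ and the Stein--Tomas line $\re\zeta=(N+1)/2$; this is the standard device for handling the nonlinear dependence on $\alpha_1,\alpha_2,Y$ (cf.\ \cite{FSa,Fr3}).

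Apart from the trivial endpoint above, everything then reduces to a Schatten estimate on the line $\re\zeta=(N+1)/2$, where $T_\zeta(z)$ has a kernel bounded by $C|z|^{-1/2}$ by \eqref{eq:12infty}. What is needed is a one-parameter family
\[
\bigl\|\beta_1\,T_\zeta(z)\,\beta_2\bigr\|_{\rho_0}\le C\,|z|^{-1/2}\,\|\beta_1\|_m\|\beta_2\|_m \qquad\bigl(\re\zeta=(N+1)/2\bigr),
\]
valid for $2N/(N+1)\le m\le 2$ with the \emph{sharp} index $1/\rho_0=\frac{2N}{(N-1)m}-\frac{N+1}{2(N-1)}$. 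Interpolating each such bound against $\|\cdot\|_\infty$ at $\re\zeta=0$ (so that $\theta=2s/(N+1)$, $1/r=\theta/\rho_0$, and the weight exponent of $\alpha_j$ is $2q=m(N+1)/(2s)$) traces out, as $m$ varies, exactly the affine relation $1/r=\frac{N}{N-1}\cdot\frac1q-\frac{s}{N-1}$ of the statement (the cut-off at $r=2$ corresponding to the elementary range discussed below), with $m=2$ giving $(q,r)=((N+1)/(2s),(N+1)/s)$ and $m=2N/(N+1)$ giving $(q,r)=(N/(2s),(N-1)/s)$; and the scaling $(-\Delta-z)^{-s}=|z|^{-s}(-|z|^{-1}\Delta-e^{i\arg z})^{-s}$ supplies the power $|z|^{-s+N/(2q)}$. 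At the top endpoint $m=2$ one has $\rho_0=2$ and the estimate is just the Hilbert--Schmidt bound (bounded kernel, $L^2$ weights); for $m$ somewhat below $2$ one can alternatively shift the endpoint to a line $\re\zeta<(N+1)/2$, where the kernel of $T_\zeta(z)$ decays and Young's inequality (together with a Hardy--Littlewood--Sobolev estimate) applies, but only at the cost of a loss in the Schatten index.

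Hence the crux is the \emph{sharp} index $\rho_0(m)$ for $m<2$ --- especially $\rho_0=2(N-1)/(N+1)$ at $m=2N/(N+1)$ --- together with its uniformity in $\arg z$. This is precisely a trace-ideal form of the Stein--Tomas restriction theorem, i.e.\ a restriction estimate for orthonormal systems on the sphere in the spirit of \cite{FSa,Fr3}, and I expect it to be the main obstacle. The reason the cheap argument is insufficient is that a merely bounded kernel yields only $\rho_0=2$ with $L^2$ weights, hence the weaker exponent $r=2q$ after interpolation; the sharp index genuinely requires the curvature of the sphere. And uniformity down to the positive real axis forces one to work with the kernel of $(-\Delta-1-i0)^{-\zeta}$, which on the line $\re\zeta=(N+1)/2$ does not decay at all --- it is, at infinity, essentially $e^{i\sqrt z|x|}$ times a bounded amplitude --- so the estimate must exploit its oscillation, which is the borderline case of the restriction inequality. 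I would obtain it by re-running the Stein--Tomas argument in the form used by Kenig--Ruiz--Sogge --- interpolating between an $L^2\to L^2$ endpoint and a bounded-kernel endpoint --- while carrying Schatten norms throughout, exactly as in the orthonormal Strichartz/restriction inequalities of Frank--Sabin.

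Granting this family, the three-lines lemma at $\re\zeta=s$ gives $\|\mathcal A_s\|_r\le A^{1-\theta}(C|z|^{-1/2})^{\theta}$; unwinding $\theta=2s/(N+1)$, the factor $e^{-s^2}$, and the normalization yields $\|\alpha_1(-\Delta-z)^{-s}\alpha_2\|_r\le C|z|^{-s+N/(2q)}\|\alpha_1\|_{2q}\|\alpha_2\|_{2q}$, as claimed. Finally I would handle the low-dimensional cases separately: for $N=1$ the Stein--Tomas line is $\re\zeta=1$, the kernel of $(-\Delta-z)^{-1}$ on $\R$ is genuinely bounded and non-oscillatory, so the endpoint degenerates to the plain Hilbert--Schmidt bound and one gets $r=2$; and the extra hypotheses ``$q<2$ for $N=1$, $s\le1/2$'' and ``$q>1/s$ for $N=2$, $s\le1/2$'' are precisely the ranges in which the elementary estimates used to reach the $r=2$ part of the range (Young's inequality, respectively Hardy--Littlewood--Sobolev control of the kernel's local singularity) remain valid, each failing only at the borderline value it excludes.
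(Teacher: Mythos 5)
There is a genuine gap: the estimate your whole argument hinges on---the Schatten-class bound on the Stein--Tomas line $\re\zeta=(N+1)/2$ with the sharp index $\rho_0(m)<2$ for $m<2$---is never proved; you explicitly defer it (``the main obstacle'', to be obtained ``by re-running the Stein--Tomas argument \dots carrying Schatten norms throughout''). That endpoint is essentially the Frank--Sabin orthonormal restriction/uniform resolvent theorem, extended to complex powers $(-\Delta-z)^{-\zeta}$ uniformly in $\arg z$; it is much deeper than anything else in your sketch (and dangerously close to circular, since \cite{FSa} is the $s=1$ case of the very proposition being proved). As written, the proposal reduces the proposition to an unproved statement that carries all of the difficulty.

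Moreover, that detour is unnecessary, and your claim that avoiding it ``costs a loss in the Schatten index'' is incorrect for the range actually asserted. Since the proposition only claims $r=\max\{(N-1)q/(N-qs),2\}\geq 2$, the second interpolation endpoint never needs to be better than Hilbert--Schmidt. The paper exploits this by letting the endpoint line move with $q$: for $r>2$ it interpolates $\|\cdot\|_\infty$ at $\re\zeta=0$ against a Hilbert--Schmidt bound on the line $\re\zeta=t$ with $t=(N-1)qs/(2(N-qs))\in((N-1)/2,(N+1)/2]$, where the kernel bound $|T_\zeta(z)(x,y)|\lesssim |z|^{(N-1-2t)/4}|x-y|^{-(N+1-2t)/2}$ from \cite{KRS} plus Hardy--Littlewood--Sobolev gives exactly $\left\|\alpha_1^{\,t/s}T_\zeta(z)\alpha_2^{\,t/s}\right\|_2\lesssim |z|^{(N-1-2t)/4}$ with weights whose exponent works out to $L^{2q}$; interpolation then yields precisely $r=2t/s=(N-1)q/(N-qs)$ with no loss (the degenerate value $t=N/2$ is handled with the $\Gamma$-regularized family $\tilde T_\zeta$, and the $r=2$ range is done directly from the Bessel-kernel bounds plus Hardy--Littlewood--Sobolev, which is where the extra hypotheses for $N=1,2$, $s\leq 1/2$ enter). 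So no curvature/restriction input below the Hilbert--Schmidt scale is needed. Your interpolation numerology (the relation $1/r=N/((N-1)q)-s/(N-1)$, the $|z|$-power by scaling, the duality device for Schatten interpolation) is consistent, but to make the proof complete you should either supply the sharp trace-ideal Stein--Tomas endpoint you invoke, or---much simpler---replace the fixed line $\re\zeta=(N+1)/2$ by the $q$-dependent line $\re\zeta=t$ as above.
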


For $s=1$ and $q\leq (N+1)/(2s)$ this proposition appears in \cite{FSa}. There it is also shown that the trace ideal index $r$ is smallest possible if $N\geq 3$ or if $N=2$ and $q\geq 4/3$. We note that the technique from \cite{Fr3} allows one also to obtain inequalities for $q>(N+1)/(2s)$.

\begin{proof}
We distinguish the following two cases,
\begin{align*}
\text{(A)} & \qquad (N-1)/2\leq s\leq (N+1)/2 \quad \text{and}\quad q\leq 2N/(N-1+2s) \quad\text{and}\quad q<2 \,,\\
\text{(B)} & \qquad \text{either}\ s<(N-1)/2 \\
& \qquad \text{or}\ (N-1)/2\leq s\leq (N+1)/2 \quad \text{and}\quad q> 2N/(N-1+2s) \,. 
\end{align*}
Note that case (A) corresponds to $r=2$ and case (B) to $r=(N-1)q/(N-qs)$.

\emph{Case} (A). We know that $(-\Delta-z)^{-s}$ is an integral operator with integral kernel
$$
\int_{\R^N} \frac{e^{i\xi\cdot(x-y)}}{(\xi^2-z)^s}\frac{d\xi}{(2\pi)^N} = \frac{2^{1-s}}{(2\pi)^{N/2}\, \Gamma(s)} \left( \frac{\sqrt{-z}}{|x-y|}\right)^{(N-2s)/2} K_{(N-2s)/2}(\sqrt{-z}|x-y|) \,,
$$
where we choose the branch of the square root on $\C\setminus(-\infty,0]$ with positive real part; see, e.g., \cite[Section III.2.8]{GS}. Bounds on Bessel functions (we give references for more precise bounds when dealing with case (B)) show that the absolute value of this kernel is bounded by
$$
C_{N,\rho,s} |z|^{(N-2s)/2} \left(\sqrt{|z|}|x-y|\right)^{-\rho} \,,
$$
where
$$
\begin{cases}
0\leq\rho\leq (N+1-2s)/2 & \text{if}\ N/2<s\leq (N+1)/2 \,,\\
0<\rho\leq (N+1-2s)/2 & \text{if}\ N/2 = s \,,\\
N-2s\leq\rho\leq (N+1-2s)/2 & \text{if}\ (N-1)/2\leq s<N/2 \,.
\end{cases}
$$
If $0\leq2\rho<N$ we can use the Hardy--Littlewood--Sobolev inequality to bound the Hilbert--Schmidt norm and obtain
$$
\left\| \alpha_1 (-\Delta-z)^{-s} \alpha_2 \right\|_2 \leq C_{N,\rho,s}' |z|^{(N-2s-\rho)/2} \|\alpha_1\|_{2N/(N-\rho)} \|\alpha_2\|_{2N/(N-\rho)} \,.
$$
Substituting $q=N/(N-\rho)$, we see that the assumptions on $q$ in case (A) correspond to the assumptions on $\rho$ and we obtain the claimed bounds.

\emph{Case} (B). We use complex interpolation similarly as in the proof of Theorem \ref{unifsob}. Since multiplication by $\alpha_j/|\alpha_j|$ is a bounded operator, we may assume that $\alpha_j\geq 0$ for $j=1,2$. We consider the same family $T_\zeta(z)$ of operators as in the proof of Theorem \ref{unifsob}. Bound \eqref{eq:222} implies immediately that
$$
\left\| \alpha_1^{\zeta/s} T_\zeta(z) \alpha_2^{\zeta/s} \right\| \leq A
\qquad\text{if}\ \re\zeta= 0
$$
with a constant $A$ depending only on $N$. On the other hand, the explicit form of the integral kernel of $T_\zeta(z)$ and the bounds in \cite[Proof of Thm. 2.3]{KRS} (see also \cite{CS}, \cite{FSa}) show that this integral kernel satisfies
$$
\left| T_\zeta(z)(x,y) \right| \leq B_{\re\zeta} |z|^{(N-1-2\re\zeta)/4}  |x-y|^{-(N+1-2\re\zeta)/2}
\qquad\text{if}\ 0<|\re\zeta- N/2|\leq 1/2 \,.
$$
Thus, if we assume in addition that $\re\zeta>1/2$, we can bound the Hilbert--Schmidt norm as before by the Hardy--Littlewood--Sobolev inequality and get
\begin{align*}
\left\| \alpha_1^{\zeta/s} T_\zeta(z) \alpha_2^{\zeta/s} \right\|_2 \leq B_{\re\zeta}' |z|^{(N-1-2\re\zeta)/4}
\|\alpha_1\|_{4N\re\zeta/(s(N-1+2\re\zeta))}^{\re\zeta/s} \|\alpha_2\|_{4N\re\zeta/(s(N-1+2\re\zeta))}^{\re\zeta/s}.
\end{align*}

We choose $t> s$ with $0<|t-N/2|\leq 1/2$ and $t>1/2$ and use complex interpolation with the lines $\re\zeta=0$ and $\re\zeta=t$ to get
$$
\left\| \alpha_1 T_s(z) \alpha_2  \right\|_{2t/s} \leq A^{(t-s)/t} \left(B_t'\right)^{s/t} |z|^{s(N-1-2t)/(4t)} \|\alpha_1\|_{4Nt/(s(N-1+2t))} \|\alpha_2\|_{4Nt/(s(N-1+2t))} \,.
$$
Substituting $t=(N-1)qs/(2(N-qs))$, we see that the assumptions on $q$ in case (B), plus the assumption $q\neq N^2/(2(2N-1))$, correspond to the assumptions on $t$.

Finally, let $q=N^2/((2N-1)s)$. In this case we use the same family $\tilde T_\zeta(z)$ of operators as in the proof of Theorem \ref{unifsob} and note that
$$
\left| \tilde T_\zeta(z)(x,y) \right| \leq \tilde B_{N/2} |z|^{-1/4}  |x-y|^{-1/2}
\qquad\text{if}\ \re\zeta= N/2 \,.
$$
As before, we can use the Hardy--Littlewood--Sobolev inequality to bound the Hilbert--Schmidt norm of $\alpha_1^{\zeta/s} T_\zeta(z) \alpha_2^{\zeta/s}$ for $\re\zeta=N/2$, and then we can deduce the claimed bound by complex interpolation. This proves the claimed bound in case (B).
\end{proof}

Combining this proposition with Lemma \ref{dn} we get

\begin{corollary}\label{bsti}
Let $0<\gamma< 1/2$ if $d=2$, $0<\gamma\leq1/2$ if $d=3$ and $0\leq\gamma\leq 1/2$ if $d\geq 4$. Then there is a constant $C_{\gamma,d}$ such that for all $\alpha_1,\alpha_2\in L^{2(2\gamma+d-1)}(\R^{d-1})$,
$$
\left\| \alpha_1 \Gamma (-\Delta-z)^{-1} \Gamma^* \alpha_2 \right\|_r \leq C_{\gamma,d} |z|^{-\gamma/(2\gamma+d-1)} \|\alpha_1\|_{2(2\gamma+d-1)} \|\alpha_2\|_{2(2\gamma+d-1)} \,.
$$
where $r=2$ if $d=2$ and $r=2(d-2)(2\gamma+d-1)/(d-1-2\gamma)$ if $d\geq 3$.
\end{corollary}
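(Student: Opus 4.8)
The plan is to reduce the statement, via Lemma~\ref{dn}, to a single application of Proposition~\ref{unifsobti} on $\R^{d-1}$ with $s=1/2$, and then to check that all parameters match. By Lemma~\ref{dn} one has, as operators on $L^2(\R^{d-1})$,
$$
\alpha_1 \Gamma (-\Delta-z)^{-1} \Gamma^* \alpha_2 = \tfrac12\, \alpha_1 \left(-\Delta'-z\right)^{-1/2} \alpha_2 \,,
$$
so it suffices to bound the right-hand side in $\|\cdot\|_r$. I would apply Proposition~\ref{unifsobti} with $N=d-1$, $s=1/2$, and $q=2\gamma+d-1$; then $2q=2(2\gamma+d-1)$, so the $L^{2q}$-norms in that proposition are exactly the $L^{2(2\gamma+d-1)}$-norms appearing here, and the proposition applies to the (possibly complex) weights $\alpha_1,\alpha_2$ directly, since the reduction to nonnegative weights is internal to its proof.

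Next I would carry out the arithmetic. The exponent of $|z|$ produced by Proposition~\ref{unifsobti} is $-s+N/(2q) = -\tfrac12 + (d-1)/\bigl(2(2\gamma+d-1)\bigr) = \bigl(-(2\gamma+d-1)+(d-1)\bigr)/\bigl(2(2\gamma+d-1)\bigr) = -\gamma/(2\gamma+d-1)$, which is the claimed power. For the trace-ideal index: if $d=2$ then $N=1$ and the proposition gives $r=2$; if $d\geq 3$ then $N=d-1\geq 2$ and $r=\max\{(N-1)q/(N-qs),2\}$, where $(N-1)q/(N-qs) = (d-2)(2\gamma+d-1)\big/\bigl((d-1)-(2\gamma+d-1)/2\bigr) = 2(d-2)(2\gamma+d-1)/(d-1-2\gamma)$. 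A direct computation gives $(d-2)(2\gamma+d-1)-(d-1-2\gamma) = (d-1)(2\gamma+d-3)\geq 0$ for $d\geq 3$ (using $\gamma\geq 0$, and $\gamma>0$ when $d=3$), so this quantity is $\geq 2$ and the maximum equals it; this is exactly the value of $r$ stated in the corollary.

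Finally I would verify that the hypotheses on $q$ in Proposition~\ref{unifsobti} are equivalent to the stated ranges of $\gamma$. Since $2s=1$: for $d=2$ one has $N=1=2s$, so the hypothesis $1<q\leq (N+1)/(2s)=2$ together with the caveat $q<2$ (for $N=1$, $s\leq 1/2$) reads $1<2\gamma+1<2$, i.e. $0<\gamma<1/2$; for $d=3$ one has $N=2>2s$, so $N/(2s)=2\leq q\leq (N+1)/(2s)=3$ together with the caveat $q>1/s=2$ (for $N=2$, $s\leq 1/2$) reads $2<2\gamma+2\leq 3$, i.e. $0<\gamma\leq 1/2$; for $d\geq 4$ one has $N=d-1\geq 3>2s$ with no extra caveat, so $d-1\leq q\leq d$ reads $0\leq\gamma\leq 1/2$. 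These are precisely the three cases in the statement. The argument is essentially mechanical; the only point requiring genuine care is this matching of parameter ranges --- in particular checking that the low-dimensional caveats of Proposition~\ref{unifsobti} account exactly for the exclusions $\gamma=1/2$ when $d=2$ and $\gamma=0$ when $d\in\{2,3\}$.
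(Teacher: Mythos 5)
Your proposal is correct and follows exactly the route the paper intends (the paper simply says ``combining Proposition \ref{unifsobti} with Lemma \ref{dn}'' without spelling out the details): reduction via Lemma \ref{dn}, then Proposition \ref{unifsobti} with $N=d-1$, $s=1/2$, $q=2\gamma+d-1$. Your arithmetic for the $|z|$-exponent, the identification of $r$ (including checking $(d-2)(2\gamma+d-1)\geq d-1-2\gamma$), and the matching of the parameter ranges with the low-dimensional caveats are all accurate.
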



\subsection*{Proof of Theorem \ref{main4}}

According to \cite[Prop.\!\! 4.1]{Fr3} (which generalizes a result in \cite{LSu}) the eigenvalues $(E_j)$ of $-\Delta+\sigma(x')\delta(x_d)$ in $L^2(\R^d)$ coincide with the eigenvalues $(E_j)$ of finite type of the analytic family $1+K$ in $\C\setminus[0,\infty)$, repeated according to algebraic multiplicity. Here $K$ denotes the Birman--Schwinger operator from \eqref{eq:bs}. Corollary \ref{bsti} combined with \cite[Thm.\!\! 3.1]{Fr3} (which is essentially from \cite{FSa} and relies on \cite{BGK}) yields
$$
\sum_j \delta(z_j) |z_j|^{-\frac12\left( 1 - \left( \frac{2\gamma r}{2\gamma+d-1} - 1 + \epsilon\right)_+ \right) } \leq C_{\gamma,d,\epsilon} \|\sigma\|_{2\gamma+d-1}^{(2\gamma+d-1)\left( 1+\left( \frac{2\gamma r}{2\gamma+d-1} - 1 + \epsilon\right)_+\right)/(2\gamma)}
$$
for any $\epsilon>0$ with $r=2$ if $d=2$ and $r=2(d-2)(2\gamma+d-1)/(d-1-2\gamma)$ if $d \geq 3$. Setting $\tau=\left( \frac{2\gamma r}{2\gamma+d-1} - 1 + \epsilon\right)_+$, we obtain the inequality in the theorem.


\subsection*{Acknowledgement}

Support through NSF grant DMS-1363432 is acknowledged.


\bibliographystyle{amsalpha}

\end{document}